\newtheorem{Thm}{Theorem}[section]
\newtheorem{Lemma}[Thm]{Lemma}
\theoremstyle{definition}
\newtheorem{Example}[Thm]{Example}
\newtheorem{Rmk}[Thm]{Remark}
\def\bfa{\mathbf{a}}
\def\bfe{\mathbf{e}}
\def\bfp{\mathbf{p}}
\def\bfx{\mathbf{x}}
\def\frakG{\mathfrak{G}}
\def\bba{\mathbb{A}}
\def\bbd{\mathbb{D}}
\def\bbn{\mathbb{N}}
\def\bbr{\mathbb{R}}
\def\bbz{\mathbb{Z}}
\def\calS{\mathcal{S}}
\def\lra{\longrightarrow}
\def\x{\times}
\def\aff{\mathrm{Aff}}
\def\Nil{\mathrm{Nil}}
\def\ab{\mathrm{ab}}
\def\sp{\mathrm{sp}}
\def\diag{\mathrm{diag}}
\def\Sol{\mathrm{Sol}}
\def\GammaA{\Gamma_{\!A}}
\def\bfa{\mathbf{a}}
\def\R{\mathrm{(R)}}
\def\bb{|\hspace{-1pt}|}
\def\ds{\displaystyle}
\def\GR{\mathrm{GR}}
\def\top{\mathrm{top}}
\def\alg{\mathrm{alg}}
\def\curlle{\preccurlyeq}
\def\boxit#1{\vbox{\hrule\hbox{\vrule\kern3pt
     \vbox{\kern3pt#1\kern3pt}\kern3pt\vrule}\hrule}}
\begin{document}
\title[Growth rate and finitely generated nilpotent groups]
{Growth rate for endomorphisms of finitely generated nilpotent groups and solvable groups}
\author{Alexander Fel'shtyn}
\address{Instytut Matematyki, Uniwersytet Szczecinski,
ul. Wielkopolska 15, 70-451 Szczecin, Poland}
\email{fels@wmf.univ.szczecin.pl}

\author{Jang Hyun Jo}
\address{Department of mathematics, Sogang University, Seoul 121-742, KOREA}
\email{jhjo@sogang.ac.kr}

\author{Jong Bum Lee}
\address{Department of mathematics, Sogang University, Seoul 121-742, KOREA}
\email{jlee@sogang.ac.kr}

\subjclass[2000]{Primary 20F65; Secondary 20F18, 20F16}%
\keywords{Algebraic entropy, finitely generated nilpotent group, growth rate, lattice of $\Sol$}

\abstract
We prove that the growth rate of an
endomorphism of a finitely generated nilpotent group  equals to the
growth rate of induced endomorphism on its abelinization, generalizing the corresponding result
for an automorphism in \cite{Koberda}.
We also study growth rates of endomorphisms
for specific solvable groups, lattices of $\Sol$, providing a counterexample to a
known result in \cite{FFK} {and proving that the growth rate is an algebraic number}.
\endabstract
\maketitle

\tableofcontents

\section{Introduction}\label{introd}

In the present paper we study  purely algebraic notions of growth rate
and entropy for an endomorphism of a finitely generated group.

Let $\pi$ be a finitely generated group with a system $S=\{s_1,\cdots,s_n\}$ of generators.
Let $\phi:\pi\to\pi$ be an endomorphism.
For any $\gamma\in\pi$,
let $L(\gamma,S)$ be the length of the shortest word in the letters $S\cup S^{-1}$
which represents $\gamma$.
Then the {\bf growth rate} of $\phi$ is defined to be (\cite{B78})
$$
\GR(\phi):=\sup\left\{\limsup_{k\to\infty}L(\phi^k(\gamma),S)^{1/k}\mid \gamma\in\pi\right\}.
$$
For each $k>0$, we put
$$
L_k(\phi,S)=\max \left\{L(\phi^k(s_i),S)\mid i=1,\cdots,n\right\}.
$$
It is known that
$$
\GR(\phi)=\lim_{k\to\infty}L_k(\phi,S)^{1/k}
=\inf_{k}\left\{L_k(\phi,S)^{1/k}\right\},
$$
and the {\bf algebraic entropy} of $\phi$ is by definition $h_\alg(\phi):=\log\GR(\phi)$.
The growth rate and hence the algebraic entropy of $\phi$ are well-defined,
i.e., independent of the choice of a set of generators  (\cite[p.~\!114]{KH}).
It is immediate from the definition that the growth rate
and the algebraic entropy for an endomorphism of a group
are invariants of conjugacy of group endomorphisms.
Furthermore, for any inner automorphism $\tau_{\gamma_0}$ by $\gamma_0$,
we have $\GR(\tau_{\gamma_0}\phi) = \GR(\phi)$ and $h_\alg(\tau_{\gamma_0}\phi)=h_\alg(\phi)$
(\cite[Proposition~3.1.10]{KH}).

Consider a continuous map $f$ on a compact connected manifold $M$, and
consider a homomorphism $\phi$ induced by $f$ of
the group of covering transformations on the universal cover of $M$.
Then the topological entropy $h_\top(f)$ is defined.
We refer to \cite{KH} for backgrounds.
Among others, we recall that R. Bowen in \cite{B78} and A. Katok in \cite{K} have proved that
the topological entropy $h_\top(f)$ of $f$ is at least as large as the algebraic entropy
$h_\alg(\phi)=h_\alg(f)$ of $\phi$ or $f$.

The problem of determining the growth rate of a group endomorphism,
initiated  by R. Bowen in \cite{B78}, is now an area of active research
(see detailed description in \cite{FFK} and \cite{Koberda} and references therein).
{For known properties of the growth of automorphisms of free groups we refer to \cite{BFH, LL}.}

The purpose of this paper is first to study the growth rate of an endomorphism
on a finitely generated nilpotent group.
In \cite[Theorem~1.2]{Koberda} it was proven that the growth rate of an
automorphism of a finitely generated nilpotent group is equal to the
growth rate of induced automorphism on its abelinization.
Our main result  is a generalization of this result  of  \cite{Koberda}
from automorphisms to endomorphisms (Theorem~\ref{Ko type}) by using
completely different arguments.
{In Section~\ref{prelim} we remind some known results about growth rate of group  endomorphism, sometimes correcting them.}
In Section~\ref{nilpotent} we refine the calculation  in \cite{B78} of the growth rate
for an endomorphism of a finitely generated torsion-free nilpotent group
and prove that the growth rate  is an algebraic number.
Another purpose of this article is to begin a study of growth rates
for specific solvable groups, lattices of $\Sol$, providing a counterexample to Theorem~5.1 in \cite{FFK}
{and proving that the growth rate is an algebraic number.}
\medskip

\noindent
\textbf{Acknowledgments.}
The first author is indebted to the Max-Planck-Institute for Mathematics(Bonn)
and Sogang University(Seoul) for the support and hospitality
and the possibility of the present research during his visits there.
The second author was supported by Basic Science Research
Program through the National Research Foundation of Korea(NRF)
funded by the Ministry of Education, Science and Technology (2012R1A1A2006395).
The third author was supported by Basic Science Researcher Program
through the National Research Foundation of Korea(NRF) funded by the Ministry of Education
(No.~\!2013R1A1A2058693) and by the Sogang University Research Grant of 2010 (10022)

\section{Preliminaries}\label{prelim}

We shall assume in this article that {\bf all groups are finitely generated}
unless otherwise specified. For a given endomorphism $\phi:\pi\to\pi$,
if $\pi'$ is a $\phi$-invariant subgroup of $\pi$,
we denote by $\phi'=\phi|_{\pi'}$ the restriction of $\phi$ on $\pi'$.
If, in addition, $\pi'$ is a normal subgroup,
we denote by $\hat\phi$ the endomorphism on $\pi/\pi'$ induced by $\phi$.
Then the following are known, see for example \cite{B78,FFK}:
\begin{itemize}
\item $\GR(\phi^k)=\GR(\phi)^k$ for $k>0$.
\item $\GR(\hat\phi)\le\GR(\phi)$.
\item {$\GR(\phi)\le\max\left\{\GR(\phi'),\GR(\hat\phi)\right\}$.}
\item Let $\phi:\bbz^n\to\bbz^n$ be an endomorphism yielding an integer matrix $D$.
Then we have $\GR(\phi)=\sp(D)$, the maximum of absolute values of eigenvalues of $D$.
\end{itemize}

Let $S'$ be a finite set of generators for $\pi'$
and let $\hat{S}$ be a finite set of generators for the quotient group $\pi/\pi'$.
Then it is possible to extend $S'$ to a finite set $S$ of generators for $\pi$
so that $S$ is projected onto $\hat{S}$ under the projection $\pi\to\pi/\pi'$.
For any $\gamma\in\pi'$, it is true that $L(\gamma,S')\ge L(\gamma,S)$.

Consider the concentric balls $B(n)=\left\{\gamma\in\pi\mid L(\gamma,S)\le n\right\}$ for all $n>0$,
and the {\bf distortion function} of $\pi'$ in $\pi$ which is defined as
$$
\Delta^\pi_{\pi'}(n):={\max}\left\{L(\gamma,S')\mid \gamma\in\pi'\cap B(n)\right\}.
$$
The notion of distortion of a subgroup was first introduced by M.~Gromov in \cite{Gromov}.
We refer to \cite{Davis} for our discussion. For two functions $f,g:\bbn\to\bbn$, we say that $f\curlle g$
if there exists $c>0$ such that such that $f(n)\le cg(cn)$ for all $n>0$.
We say that two functions are equivalent,
written $f\approx g$, if $f\curlle g$ and $g\curlle f$.
The subgroup $\pi'$ of $\pi$ is {\bf undistorted}
if $\Delta^\pi_{\pi'}(n)\approx n$.
The following facts about distortion can be found in \cite{Davis}:
\begin{itemize}
\item If $\pi'$ is infinite, then it is true that $n\curlle\Delta^\pi_{\pi'}(n)$.
\item If $[\pi:\pi']<\infty$, then $\pi'$ is undistorted in $\pi$.
\end{itemize}

Assume that $\Delta^\pi_{\pi'}(n)\curlle n$. By definition, there exists $c>0$
such that $\Delta^\pi_{\pi'}(n)\le c^2n$ for all $n>0$.
For any $\gamma\in\pi'$, let $n=L(\gamma,S)$. Then
$$
L(\gamma,S')\le\Delta^\pi_{\pi'}(n)\le c^2n=c^2 L(\gamma,S).
$$
Thus $L(\gamma,S)\le c^2L(\gamma,S)$ for all $\gamma\in\pi'$.
This inequality induces that for all $k>0$
\begin{align*}
L_k(\phi',S')&=\max\left\{L({\phi'}^k(\gamma_i),S')\mid \gamma_i\in S'\right\}\\
&\le c^2\max\left\{L({\phi'}^k(\gamma_i),S)\mid \gamma_i\in S'\right\}
\le c^2L_k(\phi,S)
\end{align*}
and so $\GR(\phi')\le\GR(\phi)$. Consequently, we have

\begin{Lemma}[{\cite[Corollary~3.1]{FFK}}]
Let $\phi$ be an endomorphism of $\pi$. If $\pi'$ is a $\phi$-invariant undistorted subgroup in $\pi$,
then $\GR(\phi')\le\GR(\phi)$; {hence if, in addition, $\pi'$ is a normal subgroup of $\pi$, then $\GR(\phi)=\max\left\{\GR(\phi'),\GR(\hat\phi)\right\}$.}
\end{Lemma}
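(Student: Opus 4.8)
The plan is to assemble the statement from the distortion estimate established in the paragraph just preceding it together with the elementary properties of $\GR$ recalled at the beginning of this section. For the first assertion, I would start from the hypothesis that $\pi'$ is undistorted, i.e. $\Delta^\pi_{\pi'}(n)\approx n$, which in particular gives $\Delta^\pi_{\pi'}(n)\curlle n$. As shown above, this produces a constant $c>0$ with $L(\gamma,S')\le c^2 L(\gamma,S)$ for every $\gamma\in\pi'$, where $S'$ is a chosen generating set of $\pi'$ extended to a generating set $S$ of $\pi$ (so that $S'\subseteq S$). Applying this bound to $\phi^k(\gamma_i)$ for $\gamma_i\in S'$ yields $L_k(\phi',S')\le c^2 L_k(\phi,S)$ for all $k>0$, and taking $k$-th roots and passing to the limit gives $\GR(\phi')\le\GR(\phi)$.

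For the second assertion, assume in addition that $\pi'$ is a normal subgroup of $\pi$, so that the induced endomorphism $\hat\phi$ on $\pi/\pi'$ is defined. I would invoke the two inequalities from the bulleted list, namely $\GR(\hat\phi)\le\GR(\phi)$ and $\GR(\phi)\le\max\{\GR(\phi'),\GR(\hat\phi)\}$. The first assertion already gives $\GR(\phi')\le\GR(\phi)$, so both $\GR(\phi')$ and $\GR(\hat\phi)$ are at most $\GR(\phi)$, whence $\max\{\GR(\phi'),\GR(\hat\phi)\}\le\GR(\phi)$. Combining this with the reverse inequality $\GR(\phi)\le\max\{\GR(\phi'),\GR(\hat\phi)\}$ we obtain
$$
\GR(\phi)=\max\left\{\GR(\phi'),\GR(\hat\phi)\right\}.
$$

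I do not anticipate a genuine obstacle: all the content sits in the distortion inequality $L(\gamma,S')\le c^2L(\gamma,S)$ on $\pi'$, which has already been carried out, and the remainder is purely formal. The only mild points of care are the bookkeeping of the constant (writing $c^2$, so that a single $c$ absorbs both the multiplicative factor and the rescaling $n\mapsto cn$ hidden in $\Delta^\pi_{\pi'}(cn)$ in the definition of $\curlle$), and observing that the third bulleted inequality $\GR(\phi)\le\max\{\GR(\phi'),\GR(\hat\phi)\}$ holds for an arbitrary $\phi$-invariant normal subgroup, so the ``hence'' clause needs nothing beyond normality added to the undistortedness hypothesis.
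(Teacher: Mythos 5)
Your argument is correct and matches the paper's own reasoning essentially verbatim: the first inequality $\GR(\phi')\le\GR(\phi)$ is obtained from the distortion estimate $L(\gamma,S')\le c^2 L(\gamma,S)$ established in the paragraph preceding the lemma, and the "hence" clause follows formally by combining this with the bulleted facts $\GR(\hat\phi)\le\GR(\phi)$ and $\GR(\phi)\le\max\{\GR(\phi'),\GR(\hat\phi)\}$, exactly as the paper intends.
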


\begin{proof}
Since $\pi'$ is undistorted in $\pi$, we have from definition that $\Delta^\pi_{\pi'}(n)\curlle n$.
Now the proof follows from the above observation.
\end{proof}

\begin{Rmk}
Remark further that:
\begin{itemize}
\item {If $\pi'$ is of finite index in $\pi$, then $\pi'$ is undistorted
    and hence $\GR(\phi')\le\GR(\phi)$. Example~\ref{counter} shows that the inequality can be strict.
    Thus \cite[Proposition~1]{B78} (see also \cite[Theorem~3.1]{FFK}) is not correct.}
\item If $\GR(\phi)<\GR(\phi')$, then $\pi'$ is distorted, and $\pi'$ is not of finite index in $\pi$.
\end{itemize}
\end{Rmk}

\begin{Lemma}\label{GR=0}
Let $\phi$ be an endomorphism of $\pi$. If $\GR(\phi)<1$,
then $\GR(\phi)=0$ and $\phi$ is an eventually trivial endomorphism, {and vice versa.}
\end{Lemma}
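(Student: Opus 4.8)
The plan is to reduce everything to the integrality of the numbers $L_k(\phi,S)$ together with the formula $\GR(\phi)=\inf_k L_k(\phi,S)^{1/k}$ recalled in the introduction. Fix a finite generating set $S=\{s_1,\dots,s_n\}$ of $\pi$ (recall $\pi$ is finitely generated, so this and the displayed formula for $\GR(\phi)$ make sense).

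First I would treat the forward direction. Suppose $\GR(\phi)<1$. Since $\GR(\phi)=\inf_k L_k(\phi,S)^{1/k}<1$, there is some $k>0$ with $L_k(\phi,S)^{1/k}<1$, that is, $L_k(\phi,S)<1$. But $L_k(\phi,S)=\max_i L(\phi^k(s_i),S)$ is a non-negative integer, so in fact $L_k(\phi,S)=0$; equivalently $L(\phi^k(s_i),S)=0$, i.e. $\phi^k(s_i)=1$, for every $i$. As $S$ generates $\pi$ and $\phi^k$ is a homomorphism, every $\gamma\in\pi$ is a word in the $s_i^{\pm1}$ and hence $\phi^k(\gamma)=1$; thus $\phi^k$ is the trivial endomorphism and $\phi$ is eventually trivial. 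Moreover $\phi^m=\phi^{m-k}\circ\phi^k$ is trivial for all $m\ge k$, so $L_m(\phi,S)=0$ and $L_m(\phi,S)^{1/m}=0$ for all $m\ge k$, whence $\GR(\phi)=\lim_{m\to\infty}L_m(\phi,S)^{1/m}=0$. Conversely, if $\phi$ is eventually trivial, say $\phi^k$ trivial, the same computation gives $L_m(\phi,S)=0$ for all $m\ge k$ and hence $\GR(\phi)=0<1$.

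I do not expect a genuine obstacle here; the only delicate point is the passage from the strict inequality $\GR(\phi)<1$ to the equality $L_k(\phi,S)=0$ for a suitable $k$, which rests precisely on $L_k(\phi,S)$ being a non-negative integer and on the characterization $\GR(\phi)=\inf_k L_k(\phi,S)^{1/k}$ — both of which require the standing hypothesis that $\pi$ be finitely generated.
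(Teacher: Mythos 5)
Your proposal is correct and is essentially the paper's argument: both hinge on the integrality of $L_k(\phi,S)$ to convert $\GR(\phi)<1$ into $L_k(\phi,S)=0$ for some (and then all large) $k$, whence $\phi^k$ is trivial and $\GR(\phi)=0$. The only cosmetic difference is that you invoke the infimum formulation $\GR(\phi)=\inf_k L_k(\phi,S)^{1/k}$ to produce one good $k$, while the paper uses the limit formulation to get all large $m$ at once; both are stated in the introduction and lead to the same conclusion.
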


\begin{proof}
Let ${\rho}=\GR(\phi)$ and let $\epsilon=1-{\rho}>0$.
Since $\lim_{m\to\infty}L_m({\phi},S)^{1/m}={\rho}$,
there exists $N>0$ such that for all $m\ge N$ we have $L_m({\phi},S)^{1/m}-{\rho}<\epsilon$;
$L_m(\phi,S)^{1/m}<1\Rightarrow L_m(\phi,S)<1\Rightarrow L_m(\phi,S)=0$
because $L_m(\phi,S)$ is a nonnegative integer.
This implies that ${\rho}=0$ and the endomorphism ${\phi}^N$ is trivial or $\phi$ is eventually trivial.
{The converse is obvious.}
\end{proof}

\begin{Example}\label{counter}
Let $\pi=\bbz\x\bbz_2$ with generators $\alpha$ and $\beta$ such that $\beta^2=1$.
Consider an endomorphism $\phi$ of $\pi$ defined by $\phi(\alpha)=1$ and $\phi(\beta)=\beta$.
Observing that
\begin{align*}
L_n(\phi,S)&=\max\left\{L(\phi^n(\alpha),S), L(\phi^n(\beta),S)\right\}\\
&=\max\left\{L(1,S),L(\beta,S)\right\}
=\max\left\{0,1\right\}=1,
\end{align*}
we have $\GR(\phi)=1$. Similarly, we have $\GR(\phi|_\bbz)=0$ and $\GR(\phi|_{\bbz_2})=1$.
Notice further that $\bbz_2$ is a distorted subgroup of $\pi$ because $\Delta^\pi_{\bbz_2}(n)=1$ for all $n$.
\end{Example}

\begin{Lemma}\label{finite}
Let $\phi$ be an endomorphism of $\pi$.
\begin{enumerate}
\item[$(1)$] If $\pi'$ is a $\phi$-invariant finite subgroup of $\pi$,
then $\GR(\phi')\le\GR(\phi)$;
\item[$(2)$] {If, in addition, $\pi'$ is a normal subgroup of $\pi$, then $\GR(\phi)\!=\!\max\left\{\GR(\phi'),\GR(\hat\phi)\right\}$, and
    $\GR(\phi)=\GR(\hat\phi)$ if and only if $\phi'$ is eventually trivial or $\hat\phi$
    is not eventually trivial.}
\end{enumerate}
\end{Lemma}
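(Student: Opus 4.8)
The plan is to reduce part~(1) to the distortion machinery already in place. First I would observe that a finite subgroup $\pi'$ is always undistorted in $\pi$: since $\pi'$ has only finitely many elements, the values $L(\gamma,S')$ for $\gamma\in\pi'$ are bounded by a constant $c$, so $\Delta^\pi_{\pi'}(n)\le c$ for all $n$, whence $\Delta^\pi_{\pi'}(n)\curlle n$. (Note this is the one case where a subgroup can be undistorted \emph{without} satisfying $n\curlle\Delta^\pi_{\pi'}(n)$, precisely because $\pi'$ is \emph{not} infinite — compare the first bullet on distortion above and Example~\ref{counter}.) Then Lemma~\ref{FFK}-style argument preceding the cited Corollary~3.1 gives $\GR(\phi')\le\GR(\phi)$ directly; alternatively one invokes \cite[Corollary~3.1]{FFK} as restated above. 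Actually, an even more elementary route avoids distortion entirely: for $\gamma\in\pi'$ one has $L(\gamma,S')\le c$ with $c=\max\{L(\gamma,S')\mid\gamma\in\pi'\}$, so $L_k(\phi',S')\le c$ for all $k$, forcing $\GR(\phi')=\lim_k L_k(\phi',S')^{1/k}\le\lim_k c^{1/k}=1$. Combined with Lemma~\ref{GR=0} this already shows $\GR(\phi')\in\{0,1\}$; and if $\GR(\phi')=1$ then $\phi'$ is not eventually trivial, so its iterates cycle through finitely many nontrivial elements, which one checks forces $\GR(\phi)\ge 1=\GR(\phi')$ as well.

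For part~(2), assuming further that $\pi'$ is normal, the inequality $\GR(\phi)\le\max\{\GR(\phi'),\GR(\hat\phi)\}$ is the third bullet of the Preliminaries, and the reverse $\GR(\phi)\ge\max\{\GR(\phi'),\GR(\hat\phi)\}$ follows from part~(1) together with $\GR(\hat\phi)\le\GR(\phi)$ (the second bullet). Hence $\GR(\phi)=\max\{\GR(\phi'),\GR(\hat\phi)\}$. It remains to characterize when $\GR(\phi)=\GR(\hat\phi)$. By part~(1), $\GR(\phi')\le 1$, and by Lemma~\ref{GR=0} either $\GR(\phi')=0$ (equivalently $\phi'$ eventually trivial) or $\GR(\phi')=1$. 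If $\phi'$ is eventually trivial then $\GR(\phi')=0\le\GR(\hat\phi)$, so the max is $\GR(\hat\phi)$. If $\hat\phi$ is not eventually trivial then $\GR(\hat\phi)\ge 1\ge\GR(\phi')$ by Lemma~\ref{GR=0} applied to $\hat\phi$ (whose growth rate is $0$ only when eventually trivial), so again the max is $\GR(\hat\phi)$. Conversely, if $\phi'$ is not eventually trivial and $\hat\phi$ \emph{is} eventually trivial, then $\GR(\phi')=1$ while $\GR(\hat\phi)=0$, so $\GR(\phi)=\max\{1,0\}=1\ne 0=\GR(\hat\phi)$. This establishes the stated equivalence.

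The only genuinely delicate point is the elementary-route claim in part~(1) that $\GR(\phi')=1$ forces $\GR(\phi)\ge 1$ \emph{without} going through undistortedness — but this is subsumed by the clean observation that a finite subgroup is undistorted, so I would simply cite \cite[Corollary~3.1]{FFK} (restated as the Lemma above) and dispense with the ad hoc argument. Thus the main obstacle is essentially bookkeeping: making sure the two cases in the ``if and only if'' of part~(2) are exhaustive and that Lemma~\ref{GR=0} is applied in the correct direction to both $\phi'$ and $\hat\phi$. I would present part~(1) in two lines (finite $\Rightarrow$ undistorted $\Rightarrow$ apply the Corollary), then part~(2) as the short case analysis above.
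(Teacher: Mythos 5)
Your claim in part~(1) that a finite subgroup is always undistorted is false under the paper's definition and directly contradicts Example~\ref{counter}, where $\bbz_2$ is explicitly called a \emph{distorted} subgroup of $\bbz\x\bbz_2$ because $\Delta^\pi_{\bbz_2}(n)=1$ for all $n$. ``Undistorted'' means $\Delta^\pi_{\pi'}(n)\approx n$, which requires both $\Delta^\pi_{\pi'}(n)\curlle n$ and $n\curlle\Delta^\pi_{\pi'}(n)$; a finite subgroup of an infinite $\pi$ satisfies only the first, since $\Delta^\pi_{\pi'}$ is bounded. Your parenthetical tries to reconcile this by reading ``undistorted'' as $\Delta\curlle n$ alone, but that is not the definition, so you cannot literally invoke the restated Corollary~3.1: its hypothesis is not met.

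The proof is salvageable, and you essentially have it. The computation immediately \emph{preceding} that Corollary in Section~\ref{prelim} uses only the one-sided bound $\Delta^\pi_{\pi'}(n)\curlle n$ (the Corollary's own proof quotes only that direction), and a finite subgroup does satisfy $\Delta\curlle n$; so cite the observation, not the Corollary. Better still, your ``elementary route'' --- which you dismiss as ad hoc --- is in fact exactly the paper's argument and should be the proof: taking $S'=\pi'$ makes $L_k(\phi',S')$ bounded, so $\GR(\phi')\in\{0,1\}$; if $\GR(\phi')=1$ then $\phi'$ is not eventually trivial, and finiteness of $\pi'$ then produces some $x\in\pi'$ with ${\phi'}^k(x)\ne 1$ for every $k$; putting $x$ into a generating set $S$ of $\pi$ gives $L_k(\phi,S)\ge 1$ for all $k$, hence $\GR(\phi)\ge 1=\GR(\phi')$. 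This replaces the vague ``which one checks forces.'' Part~(2) is correct and matches the paper's reasoning; it also usefully makes explicit the equality $\GR(\phi)=\max\{\GR(\phi'),\GR(\hat\phi)\}$ from part~(1) plus the two general bullets, a step the paper's proof leaves implicit.
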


\begin{proof}
If the $\phi$-invariant subgroup $\pi'$ of $\pi$ is finite,
then we can show easily that $\GR(\phi')$ is either $0$ or $1$
by taking a system of generators $S'=\pi'$ for $\pi'$.
We will show that $\GR(\phi')\le\GR(\phi)$. May assume that $\GR(\phi')=1$.
This implies that there is an element $x\in\pi'$ such that ${\phi'}^n(x)\ne1$ for all $n>0$.
Considering any system of generators for $\pi$ which contains $x$,
we can see right away that $\GR(\phi)\ge1=\GR(\phi')$.

{Assume that $\pi'$ is normal in $\pi$. If $\GR(\phi')=0$, then it is clear that
$\GR(\phi)=\GR(\hat\phi)$. On the other hand, if $\GR(\phi')=1$ then $\GR(\phi)=\GR(\hat\phi)$
if and only if $\GR(\hat\phi)\ge1$ if and only if $\hat\phi$ is not eventually trivial by Lemma~\ref{GR=0}.}
\end{proof}

\begin{Rmk}
However the above lemma is not true anymore whenever $\pi'$ is infinite, see also Example~\ref{BS}.
Note further that if $\GR(\phi)<\GR(\phi')$, then $\pi'$ is infinite.
\end{Rmk}

The following is an well-known example about subgroup distortion.

\begin{Example}\label{BS}
Let $\pi$ be the Baumslag-Solitar group $B(1,n)$:
$$
B(1,n):= \left\langle a,b\mid a^{-1}ba = b^n \right\rangle,\ n>1.
$$
Then $S = \{a,b\}$ is a generating set for $\pi$. Let $\pi'=\langle b\rangle$ and let $S'=\{b\}$.
We observe that the subgroup $\pi'$ of $\pi$ is distorted.
In fact, since $b^{n^{k}} = a^{-k}ba^{k}$ for all $k>0$,
we have that $L(b^{n^k},S')=n^k$ and $L(b^{n^k},S)=2k+1$.
{If $\phi$ is an endomorphism of $\pi$ given by $\phi(b)=b^n$ and $\phi(a)=a$, then
we can see that $\GR(\phi')=n$ and $\GR(\phi)=1$.}
\end{Example}

{Example~\ref{counter} shows that \cite[Proposition~1]{B78} is not correct in general,
but it is almost true in the sense of Theorem~\ref{f-index}.}
By modifying the argument of the proof of \cite[Theorem~3.1]{FFK}, we have:

\begin{Thm}\label{f-index}
Let $\phi$ be an endomorphism of $\pi$ and let $\pi'$ be a $\phi$-invariant, finite index subgroup of $\pi$.
\begin{enumerate}
\item[$(1)$] If $\phi'$ is not an eventually trivial endomorphism, then $\GR(\phi)=\GR(\phi')$.
\item[$(2)$] If $\phi'$ is an eventually trivial endomorphism of $\pi'$,
then $\GR(\phi')=0$ and $\GR(\phi)=0$ or $1$.
Moreover, $\GR(\phi)=0$ if and only if $\phi$ is an eventually trivial endomorphism of $\pi$.
\end{enumerate}
Consequently, the equality
$\GR(\phi)=\GR(\phi')$ holds except only the case when $\phi'$ is eventually trivial and $\phi$ is not eventually trivial. If this is the case, then $\GR(\phi')=0$ and $\GR(\phi)=1$.
\end{Thm}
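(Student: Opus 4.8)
The plan is to reduce everything to word--length estimates for the restriction $\phi'$, using a set of coset representatives in place of a quotient, since $\pi'$ need not be normal. Since $\pi'$ has finite index it is undistorted in $\pi$, so by the Lemma (\cite[Corollary~3.1]{FFK}) we already have $\GR(\phi')\le\GR(\phi)$; hence in every case only an upper bound for $\GR(\phi)$ must be produced. Fix coset representatives $e=g_1,g_2,\dots,g_m$ for $\pi'$ in $\pi$, a finite generating set $S'$ of $\pi'$, and put $S=S'\cup\{g_2,\dots,g_m\}$, a generating set of $\pi$. As $\phi(g_i)$ lies in a unique coset, write $\phi(g_i)=g_{\sigma(i)}h_i$ with $\sigma\colon\{1,\dots,m\}\to\{1,\dots,m\}$ and $h_i\in\pi'$; there are only finitely many $h_i$. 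A routine induction then gives
$$
\phi^k(g_i)=g_{\sigma^k(i)}\cdot\prod_{j=0}^{k-1}\phi^j(h_{\sigma^{k-1-j}(i)}).
$$
Since each $\phi^j(h_\ell)\in\pi'$ and $S'\subseteq S$, we have $L(\phi^j(h_\ell),S)\le L(\phi^j(h_\ell),S')\le L(h_\ell,S')\,L_j(\phi',S')$, so with $C=\max_\ell L(h_\ell,S')$ we obtain
$$
L(\phi^k(g_i),S)\le 1+C\sum_{j=0}^{k-1}L_j(\phi',S'),
$$
while $L(\phi^k(s),S)\le L_k(\phi',S')$ for $s\in S'$. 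These two bounds drive the rest of the argument.

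For part (1), if $\phi'$ is not eventually trivial then $\GR(\phi')\ge1$ by Lemma~\ref{GR=0}. Given $\epsilon>0$, choose $\alpha\ge1$ with $L_j(\phi',S')\le\alpha(\GR(\phi')+\epsilon)^j$ for all $j\ge0$; since $\GR(\phi')+\epsilon>1$, summing the geometric series in the displayed bound yields $L_k(\phi,S)\le c\,(\GR(\phi')+\epsilon)^k$ for some constant $c$, whence $\GR(\phi)\le\GR(\phi')+\epsilon$. Letting $\epsilon\to0$ and combining with $\GR(\phi')\le\GR(\phi)$ gives $\GR(\phi)=\GR(\phi')$.

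For part (2), if $\phi'$ is eventually trivial then $\GR(\phi')=0$ and there is $N$ with $\phi^N|_{\pi'}$ trivial, so $\phi^j(h_\ell)=e$ for all $j\ge N$. Thus for $k\ge N$ the product above collapses to its first $N$ factors and $L(\phi^k(g_i),S)\le 1+C\sum_{j=0}^{N-1}L_j(\phi',S')$, a bound independent of $k$; likewise $L(\phi^k(s),S)=0$ for $s\in S'$ and $k\ge N$. Hence $L_k(\phi,S)$ stays bounded, so $\GR(\phi)\le1$, and since $\GR(\phi)\in\{0\}\cup[1,\infty)$ by Lemma~\ref{GR=0} we conclude $\GR(\phi)\in\{0,1\}$; the statement that $\GR(\phi)=0$ if and only if $\phi$ is eventually trivial is then precisely Lemma~\ref{GR=0} applied to $\phi$ itself.

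The final ``consequently'' clause is only bookkeeping: in case (1) the growth rates agree, and in case (2) they agree exactly when $\phi$ is also eventually trivial (both equal $0$), the sole exception being $\GR(\phi')=0<1=\GR(\phi)$ — which does occur, for instance in Example~\ref{counter} with $\pi'=\bbz$. The step I expect to be most delicate is precisely the passage through a non-normal finite-index subgroup: one cannot invoke a quotient endomorphism as in the normal case, so the ``tail'' $\prod_j\phi^j(h_{\sigma^{\bullet}(i)})$ governed by the self-map $\sigma$ must take over that role, and one must check carefully that this tail is genuinely uniformly bounded once $k\ge N$ in case (2), and contributes only a geometric factor in case (1).
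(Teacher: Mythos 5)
Your proposal is correct and follows essentially the same strategy as the paper: both fix coset representatives for the (not necessarily normal) finite-index subgroup, derive the telescoping formula $\phi^k(g_i)=g_{\sigma^k(i)}\prod_{j=0}^{k-1}\phi^j(h_{\sigma^{k-1-j}(i)})$, bound the contribution of the ``tail'' by a geometric series in $\GR(\phi')+\epsilon$ for part (1) and by a $k$-independent constant for part (2), and use undistortion of $\pi'$ to get the reverse inequality $\GR(\phi')\le\GR(\phi)$. The only difference is organizational: the paper separates the cases $p=0$ and $p\ge1$ where $p=\max_\ell L(h_\ell,S')$, whereas your uniform bound $L(\phi^k(g_i),S)\le 1+C\sum_{j=0}^{k-1}L_j(\phi',S')$ handles them at once, which is slightly cleaner but not a genuinely different route.
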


\begin{proof}
%
Let $S'=\{\gamma_1,\cdots,\gamma_t\}$ be a set of generators of
$\pi'$. Let $u=[\pi:\pi']$. Then we have $\pi=\delta_1\pi'\cup\cdots\cup\delta_u\pi'$
so that $S=\{\gamma_1,\cdots,\gamma_t,\delta_1,\cdots,\delta_u\}$ generates $\pi$.
For any $j=1,\cdots,u$, there exists a unique $k_j$ such that $\phi(\delta_j)\in \delta_{k_j}\pi'$.
We denote
$$
p=\max_{1\le j\le u}\left\{L(w_j,{S'})\mid \phi(\delta_j)=\delta_{k_j}w_j\in\delta_{k_j}\pi'\right\}.
$$

Assume that $p=0$. Then $\phi(\delta_j)=\delta_{k_j}$ for all $j=1,\cdots,u$.
For each $j=1,\cdots,u$, we write $\phi^m(\delta_j)=\delta_{j_m}$.
Hence $L(\phi^m(\delta_j),S)=0$ or $1$ according as $\delta_{j_m}=1$ or $\delta_{j_m}\ne1$.

Suppose that there is $N>0$ such that $\phi^N(\delta_j)=1$ for all $j=1,\cdots,u$
and hence $L(\phi^m(\delta_j),S)=0$ for all $m\ge N$.
Since $\pi'$ is undistorted in $\pi$, there exists $c>0$ such that
\begin{align*}
L(\gamma,S')\le c^2\cdot L(\gamma,{S}),\quad \forall\gamma\in\pi'.
\end{align*}
It is clear that
\begin{align*}
L(\gamma,S)\le L(\gamma,{S'}),\quad \forall\gamma\in\pi'.
\end{align*}
Thus,
\begin{align*}
&L_m(\phi',S')\le c^2\cdot L_m(\phi,S),\\
&L_m(\phi,S)
=\max\left\{L(\phi^m(\gamma_i),S)\right\}\le L_m(\phi',S').
\end{align*}
This implies that $\GR(\phi')=\GR(\phi)$.

Suppose on the contrary that for any $m>0$ there is $j$ such that $\phi^m(\delta_j)\ne1$.
Then $\max\left\{L(\phi^m(\delta_j),S)\right\}=1$. Hence
\begin{align*}
L_m(\phi,S)&=\max\left\{L(\phi^m(\gamma_i),S), L(\phi^m(\delta_j),S)\right\}\\
&=\max\left\{L(\phi^m(\gamma_i),S),1\right\}
\le \max\left\{L_m(\phi',S'),1\right\}.
\end{align*}
This implies that $\GR(\phi')\le\GR(\phi)\le\max\left\{\GR(\phi'),1\right\}$.
Since $\phi'$ is not eventually trivial, Lemma~\ref{GR=0} implies that $\GR(\phi')\ge1$, and hence
$\GR(\phi)=\GR(\phi')$.

Next we assume next that $p\ge1$.
For each $j=1,\cdots,u$, we write $\phi(\delta_j)=\delta_{j_1}w_1$ for some $j_1$ and $w_1\in\pi'$.
Then
\begin{align*}
\phi^m(\delta_j)=\delta_{j_m}w_m\phi(w_{m-1})\cdots\phi^{m-1}(w_1).
\end{align*}
and thus
$$
L(\phi^m(\delta_j),S)\le 1+p+pL_1(\phi',S')+pL_2({\phi'},S')+\cdots+pL_{m-1}({\phi'},S').
$$
Let $L=\GR(\phi')$. By the assumption of our proposition, $L\ge1$. Let $\epsilon>0$ be given. Since $\lim_{m\to\infty}L_m({\phi'},S')^{1/m}=L$,
there is $N>0$ such that if $m>N$ then $L_m({\phi'},S')<(L+\epsilon)^m$.
Choose $q_1,\cdots,q_N>0$ such that $L_i({\phi'},S')< q_i(L+\epsilon)^i$ for $i=1,\cdots,N$.
Put $q=\max\left\{q_1,\cdots,q_N,1\right\}\ge1$. Then $L_m({\phi'},S')<q(L+\epsilon)^m$ for all $m\ge1$. Hence we have
\begin{align*}
L(\phi^m(\delta_j),S)&\le 1+p+pq(L+\epsilon)+pq(L+\epsilon)^2+\cdots+pq(L+\epsilon)^{m-1}\\
&\le 1+pq\frac{(L+\epsilon)^m-1}{(L+\epsilon)-1}.
\end{align*}
Since $pq\ne0$, this induces that
$$
\lim_{m\to\infty}\sqrt[m]{\max_{j}\left\{L(\phi^m(\delta_j),S)\right\}}\le L+\epsilon.
$$
Since $\pi'$ is undistorted in $\pi$, there exists $c>0$ such that
\begin{align*}
L_m(\phi',S')&=\max_{i}\left\{L({\phi'}^m(\gamma_i),S')\right\}\\
&\le c^2\cdot\max_{i,j}\left\{L({\phi'}^m(\gamma_i), {S}),L(\phi^m(\delta_j),S)\right\}=c^2L_m(\phi,S),
\end{align*}
and hence we obtain
$$
L=\GR(\phi')\le\GR(\phi)=\lim_{m\to\infty}\sqrt[m]{L_m(\phi,S)}\le L+\epsilon
$$
for all $\epsilon>0$. Consequently, $\GR(\phi)=\GR(\phi')$.

Suppose {that} $\phi'$ is an eventually trivial endomorphism of $\pi'$.
Then {it is clear that} $\GR(\phi')=0$. Consider a set $S=\left\{\gamma_1,\cdots,\gamma_t,\delta_1,\cdots,\delta_u\right\}$ of generators for $\pi$ as above.
For any $m>0$, we observe that
$$
\phi^m(\gamma_i)=1,\quad
\phi^m(\delta_j)=\delta_{j_m}w_m
$$
for some $j_m\in\left\{1,\cdots,u\right\}$ and $w_m$ in a finite subset of $\pi'$.
This induces that the sequence $\left\{L_m(\phi,S)\right\}$ is bounded.
Because $L_m(\phi,S)=0$ or $\ge1$, it follows that $\GR(\phi)=0$ or $1$ respectively.

When $\GR(\phi)=0$, Lemma~\ref{GR=0} says that $\phi$ is an eventually trivial endomorphism.
Next we consider the case when $\GR(\phi)=1$. From the definition, we can choose $N>0$ so that for $m\ge N$
we have $1/2^m<L_m(\phi,S)$, which implies that $L_m(\phi,S)\ge1$ because $L_m(\phi,S)$ is an integer. Therefore, for each $m\ge N$, we can choose $\gamma\in S$ such that $\phi^m(\gamma)\ne1$. This shows that $\phi$ is not eventually trivial even though $\phi'$ is eventually trivial.
\end{proof}

Before leaving this section, we observe the following elementary facts.
These turn out to be useful in driving practical computation formula of $\GR(\theta)$, see Section~\ref{Sol-lattice}.

\begin{Lemma}\label{max-lim}
Let $\phi$ be an endomorphism of $\pi$ with a finite set $S$ of generators.
Let
$$
\GR_i(\phi)=\lim_{k\to\infty} L(\phi^k(s_i),S)^{1/k}
$$
for each $s_i\in S$. Then $\GR(\phi)=\max\left\{\GR_i(\phi)\mid s_i\in S\right\}$.
\end{Lemma}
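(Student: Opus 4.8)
The plan is to show the two inequalities separately, both of which are elementary. Recall the standard fact quoted in the Introduction that
$$
\GR(\phi)=\lim_{k\to\infty}L_k(\phi,S)^{1/k},\qquad
L_k(\phi,S)=\max_{s_i\in S}L(\phi^k(s_i),S).
$$

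For the inequality $\GR(\phi)\ge\max_i\GR_i(\phi)$: fix $s_i\in S$. Since $L(\phi^k(s_i),S)\le L_k(\phi,S)$ for every $k$, taking $k$-th roots and passing to the limit gives $\GR_i(\phi)=\lim_k L(\phi^k(s_i),S)^{1/k}\le\lim_k L_k(\phi,S)^{1/k}=\GR(\phi)$. (One should first note that each limit $\GR_i(\phi)$ exists; this follows from the submultiplicativity $L(\phi^{k+l}(s_i),S)\le L_k(\phi,S)\,L(\phi^l(s_i),S)$ together with Fekete's lemma, or one may simply work with $\limsup$ throughout, as in the original definition of $\GR$, which changes nothing in what follows.) Taking the maximum over $s_i\in S$ yields $\max_i\GR_i(\phi)\le\GR(\phi)$.

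For the reverse inequality $\GR(\phi)\le\max_i\GR_i(\phi)$: set $R=\max_i\GR_i(\phi)$ and let $\epsilon>0$. For each of the finitely many $s_i\in S$ there is $N_i$ with $L(\phi^k(s_i),S)^{1/k}<R+\epsilon$, i.e. $L(\phi^k(s_i),S)<(R+\epsilon)^k$, for all $k\ge N_i$; since $S$ is finite we may take $N=\max_i N_i$, and then $L_k(\phi,S)=\max_i L(\phi^k(s_i),S)<(R+\epsilon)^k$ for all $k\ge N$. Hence $L_k(\phi,S)^{1/k}<R+\epsilon$ for all $k\ge N$, so $\GR(\phi)=\lim_k L_k(\phi,S)^{1/k}\le R+\epsilon$. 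Letting $\epsilon\to0$ gives $\GR(\phi)\le R$, and the two inequalities together prove $\GR(\phi)=\max\{\GR_i(\phi)\mid s_i\in S\}$.

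There is no real obstacle here: the whole content is that a maximum over a finite index set commutes with the relevant $\limsup$/$\lim$, the finiteness of $S$ being exactly what makes the uniform choice of $N$ possible. The only point that deserves a line of care is the existence of the individual limits $\GR_i(\phi)$ in the stated form; if one prefers to avoid invoking Fekete's lemma, one can phrase the lemma and its proof with $\limsup_{k\to\infty}$ in place of $\lim_{k\to\infty}$, which is the form in which it will be applied in Section~\ref{Sol-lattice}.
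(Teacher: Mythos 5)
Your proof is correct and takes essentially the same route as the paper's: both inequalities follow from the finiteness of $S$, which lets one pick a uniform threshold index; the paper phrases the $\le$ direction as a contradiction using the characterization $\GR(\phi)=\inf_k L_k(\phi,S)^{1/k}$, whereas you argue directly with an $\epsilon$ and the limit characterization, but these are interchangeable. Your remark about the existence of the individual limits $\GR_i(\phi)$ is a fair observation that the paper silently assumes (in the applications in Section~\ref{Sol-lattice} the limits are computed explicitly), and your suggestion to use $\limsup$ if one wishes to avoid the issue is a sound way to make the lemma unconditional.
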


\begin{proof}
Since $L(\phi^k(s_i),S)\le L_k(\phi,S)$, it follows that $\GR_i(\phi)\le\GR(\phi)$.
Assume $\GR_i(\phi)<\GR(\phi)$ for all $s_i\in S$. Thus
there exists $K>0$ such that if $k\ge K$ and $s_i\in S$ then $L(\phi^k(s_i),S)^{1/k}<\GR(\phi)$.
Because $S$ is finite, it follows that $L_k(\phi,S)^{1/k}<\GR(\phi)$ for all $k\ge K$.
However, since $\lim_{k\to\infty}L_k(\phi,S)^{1/k}=\lim_{k\ge K}L_k(\phi,S)^{1/k}=\inf_{k\ge K}L_k(\phi,S)^{1/k}$, we obtain a contradiction: $\GR(\phi)=\inf_{k\ge K}L_k(\phi,S)^{1/k}<\GR(\phi)$.
\end{proof}

\begin{Lemma}\label{tech1}
Assume that $f(k), g(k)\ge0$, $\ds{\lim_{k\to\infty} f(k)^{1/k}}=F$ and $\ds{\lim_{k\to\infty} g(k)^{1/k}}=G$.
Then for any positive constants $A$ and $B$, we have
$$
\lim_{k\to\infty}\left(Af(k)+Bg(k)\right)^{1/k}=\lim_{k\to\infty}\left(f(k)+g(k)\right)^{1/k}=\max\left\{F,G\right\}.
$$
\end{Lemma}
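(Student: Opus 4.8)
The plan is to prove the chain of equalities in Lemma~\ref{tech1} by reducing everything to the single inequality $\limsup_{k\to\infty}(f(k)+g(k))^{1/k}\le\max\{F,G\}$ together with the obvious lower bounds. First I would observe that since $A,B>0$ we may pick constants $0<c\le C$ with $c(f(k)+g(k))\le Af(k)+Bg(k)\le C(f(k)+g(k))$ for all $k$ (take $c=\min\{A,B\}$, $C=\max\{A,B\}$). Because $c^{1/k}\to1$ and $C^{1/k}\to1$, a squeeze shows that $\lim_k(Af(k)+Bg(k))^{1/k}$ exists and equals $\lim_k(f(k)+g(k))^{1/k}$ as soon as the latter exists; so it suffices to treat the case $A=B=1$.

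Next I would handle $\lim_k(f(k)+g(k))^{1/k}=\max\{F,G\}$. For the lower bound, from $f(k)+g(k)\ge f(k)$ and $f(k)+g(k)\ge g(k)$ we get $\liminf_k(f(k)+g(k))^{1/k}\ge\max\{F,G\}$, using that $f(k)^{1/k}\to F$ and $g(k)^{1/k}\to G$ are genuine limits. For the upper bound, fix $\varepsilon>0$; there is $N$ so that for all $k\ge N$ we have $f(k)\le(F+\varepsilon)^k$ and $g(k)\le(G+\varepsilon)^k$, hence $f(k)+g(k)\le 2\max\{F+\varepsilon,G+\varepsilon\}^k=2(\max\{F,G\}+\varepsilon)^k$. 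Taking $k$-th roots and letting $k\to\infty$ gives $\limsup_k(f(k)+g(k))^{1/k}\le\max\{F,G\}+\varepsilon$, and since $\varepsilon$ was arbitrary, $\limsup_k(f(k)+g(k))^{1/k}\le\max\{F,G\}$. Combining the two bounds, the limit exists and equals $\max\{F,G\}$, and then by the first paragraph the same holds with the constants $A,B$ inserted.

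There is one genuine subtlety I would want to address explicitly rather than sweep aside: the edge case where $F$ or $G$ is $0$. If, say, $F=0$ then $f(k)^{1/k}\to0$ means $f(k)<1$, hence $f(k)=0$, for all large $k$ when $f$ is integer-valued; but the lemma as stated only assumes $f(k),g(k)\ge0$ as reals, so one should instead just note that $f(k)\le(F+\varepsilon)^k=\varepsilon^k$ eventually, which is exactly what the argument above uses and which goes through verbatim. So no separate case analysis is actually needed; the uniform estimate $f(k)+g(k)\le2(\max\{F,G\}+\varepsilon)^k$ covers everything. The main (very mild) obstacle is simply being careful that all the stated limits are limits and not merely $\limsup$s, so that the lower bound $\max\{F,G\}$ is legitimate; everything else is the standard $k$-th-root squeeze. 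This lemma will then be applied in Section~\ref{Sol-lattice} to word lengths of the form $L(\phi^k(s_i),S)$, which are sums of contributions growing at various exponential rates, to conclude that only the dominant rate survives under taking $k$-th roots.
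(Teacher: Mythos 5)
Your proof is correct, and it takes a genuinely different and in fact cleaner route than the paper's. The paper first disposes of the case $G=0$ separately, and then, for $0<G\le F$, normalizes by $F^k$ and argues via $\log\Bigl(\lim_k\tfrac{(f(k)+g(k))^{1/k}}{F}\Bigr)=\lim_k\tfrac{1}{k}\log\bigl(\tfrac{f(k)}{F^k}+\tfrac{g(k)}{F^k}\bigr)=0$. Along the way the paper asserts that $\lim_k f(k)^{1/k}=F$ implies $\lim_k\tfrac{f(k)}{F^k}=1$, which is simply false (take $f(k)=kF^k$: then $f(k)^{1/k}\to F$ but $f(k)/F^k\to\infty$); what is true and what the argument actually needs is only the much weaker statement $\tfrac{1}{k}\log\tfrac{f(k)}{F^k}\to0$. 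Your two-sided squeeze — lower bound from $f(k)+g(k)\ge\max\{f(k),g(k)\}$, upper bound from the eventual inequalities $f(k)\le(F+\varepsilon)^k$ and $g(k)\le(G+\varepsilon)^k$ — avoids logarithms and normalization altogether, handles the $F=0$ or $G=0$ cases without any branching, and makes the constant reduction $A=B=1$ explicit via $\min\{A,B\}$ and $\max\{A,B\}$. In short, your version is both more elementary and more careful; the published proof reaches a correct conclusion but through an intermediate claim that does not hold as stated.
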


\begin{proof}
If $\lim_{k\to\infty} f(k)^{1/k}=F$, then
$\lim_{k\to\infty} (Af(k))^{1/k}=\lim_{k\to\infty} A^{1/k}\cdot\lim_{k\to\infty} f(k)^{1/k}=F$.
This implies that we can assume $A=B=1$.

If $G=0$, then for sufficiently large $k$, $g(k)^{1/k}<\frac{1}{2}$ or $g(k)<\frac{1}{2^k}$,
which implies that $g(k)\to0$. Hence
$\lim_{k\to\infty}(f(k)+g(k))^{1/k}=\lim_{k\to\infty} f(k)^{1/k}=F=\max\left\{F,G\right\}$.

We may now assume that $0<G\le F$. The assumption $\lim_{k\to\infty} f(k)^{1/k}=F$ deduces
that $\lim_{k\to\infty} \frac{f(k)^{1/k}}{F}=1$
and it follows that $\lim_{k\to\infty}\frac{f(k)}{F^k}=1$.
Similarly, $\lim_{k\to\infty}\frac{g(k)}{G^k}=1$. So, $\lim_{k\to\infty} \frac{g(k)}{F^k}=\lim_{k\to\infty} \frac{g(k)}{G^k}\left(\frac{G}{F}\right)^k$
is $1$ or $0$ according as $G=F$ or $G<F$. Therefore
\begin{align*}
\log\left(\lim_{k\to\infty}\frac{(f(k)+g(k))^{1/k}}{F}\right)
&=\lim_{k\to\infty}\frac{\log\frac{f(k)+g(k)}{F^k}}{k}\\
&=\lim_{k\to\infty}\frac{\log\left(\frac{f(k)}{F^k}+\frac{g(k)}{F^k}\right)}{k}=0.
\end{align*}
This proves our assertion.
\end{proof}

\section{Finitely generated nilpotent groups}\label{nilpotent}

Consider the lower central series of a finitely generated group
$\pi$: $\pi=\pi_1\supset\pi_2\supset\cdots$,
where $\pi_{j}=[\pi,\pi_{j-1}]$ is the $j$-fold commutator subgroup $\gamma_j(\pi)$ of $\pi$.
The endomorphism $\phi:\pi\to\pi$ induces endomorphisms
$$
\phi_j:\pi_j\to\pi_j,\
\hat\phi_j:\pi/\pi_j\to\pi/\pi_j,\
\bar\phi_j:\pi_j/\pi_{j+1}\to\pi_j/\pi_{j+1}.
$$
Then it is known from \cite{B78} that $\GR(\phi)\ge\GR(\bar\phi_j)^{1/j}$ for all $j\ge1$.
The group $\pi$ is called {\bf nilpotent} if $\pi_j=1$ for some $j$. When $\pi_c\ne1$ but $\pi_{c+1}=1$,
we say that it is {\bf $c$-step}. It is also known from \cite{B78} that:
\begin{itemize}
\item If $\pi$ is $c$-step nilpotent, then $\GR(\phi)=\max\left\{\GR(\hat\phi_c), \GR(\phi_c)^{1/c}\right\}$,
\item If $\pi$ is nilpotent, then $\GR(\phi)= \max_{j\ge1}\left\{\GR(\bar\phi_j)^{1/j}\right\}$.
\end{itemize}

Recall for example from \cite[Proposition~3.1]{Koberda} that a finitely generated nilpotent group $\pi$
is virtually torsion-free. Thus there exists a finite index, torsion-free, normal subgroup $\Gamma$ of $\pi$.
Following the proof of \cite[Lemma~3.1]{LL-JGP}, we can see that there exists a fully invariant subgroup $\Lambda\subset\Gamma$
of $\pi$ which is of finite index. Therefore, any endomorphism $\phi:\pi\to\pi$ restricts to
an endomorphism $\phi':\Lambda\to\Lambda$.
By Theorem~\ref{f-index}, we may consider only the case when $\phi'$ is not eventually trivial
and hence we may assume that $\GR(\phi)=\GR(\phi')$.
Consequently, for the computation of $\GR(\phi)$, we may assume that $\pi$ is a finitely generated torsion-free nilpotent group.

Consider the lower central series of a finitely generated torsion-free $c$-step nilpotent group
$\pi$: $\pi=\pi_1,\ \pi_{j+1}=[\pi,\pi_j]$, $\pi_c\ne1$ and $\pi_{c+1}=1$.
For each $j=1,\cdots,c$, we consider the isolator of $\pi_j$ in $\pi$:
$$
\sqrt{\pi_j}=\sqrt[\pi]{\pi_j}:=\left\{x\in\pi\mid x^k\in\pi_j \text{ for some $k\ge1$}\right\}.
$$
Then it is known that $\sqrt{\pi_j}$ is a characteristic subgroup of $\pi$
with $[\sqrt{\pi_j}:\pi_j]$ is finite.
Furthermore, $\sqrt{\pi_j}/\pi_j$ is precisely the set of all torsion elements
in the nilpotent group $\pi/\pi_j$ and $\sqrt{\pi_j}/\sqrt{\pi_{j+1}}\cong\bbz^{k_j}$ for some integer $k_j>0$.
Hence we obtain the {\bf adapted central series}
$$
\pi=\sqrt{\pi_1}\supset\sqrt{\pi_2}\supset\cdots\supset\sqrt{\pi_c}\supset\sqrt{\pi_{c+1}}=1.
$$

The following lemma plays a crucial role in our study of growth rate for endomorphisms
of finitely generated nilpotent groups.

\begin{Lemma}[{\cite[Lemma~3.7]{Wolf68}}]\label{W-basis}
Let $\pi$ be a finitely generated $c$-step nilpotent group with lower central series
$$
\pi={\pi_1}\supset{\pi_2}\supset\cdots\supset{\pi_c}\supset{\pi_{c+1}}=1.
$$
Then there are finite sets $T_j=\left\{\tau_{j1},\cdots,\tau_{jk_j}\right\}\subset{\pi_j}$ such that
\begin{enumerate}
\item[$(1)$] if $p_j:{\pi_j}\to{\pi_j}/{\pi_{j+1}}$ denotes the projection,
then $p_j(T_j)$ is an independent set of generators for the finitely generated Abelian group ${\pi_j}/{\pi_{j+1}}$;
\item[$(2)$] if $j>1$, then every $\tau_{jr}$ is of the form $[\tau_{1i},\tau_{j-1,\ell}]$; and
\item[$(3)$] $T_1$ generates $\pi$.
\end{enumerate}
\end{Lemma}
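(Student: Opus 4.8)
The plan is to build the sets $T_j$ by recursion on $j$, taking $T_1$ to be a generating set of $\pi$ whose image spans the abelianization and then, at each level, extracting $T_j$ from the commutators $[\tau_{1i},\tau_{j-1,\ell}]$. Two standard ingredients drive this. First, a basic fact about nilpotent groups: if a subset $X$ of a nilpotent group $G$ generates $G$ modulo $[G,G]$, then $X$ generates $G$ --- indeed $\langle X\rangle\,\gamma_k(G)=G$ forces $\langle X\rangle\,\gamma_{k+1}(G)=G$ since $\gamma_k(G)=[G,\gamma_{k-1}(G)]$, and one iterates down to $\gamma_{c+1}(G)=1$. Second, the commutator pairing: for each $j\ge2$ the map
\[
\pi/\pi_2\times\pi_{j-1}/\pi_j\longrightarrow\pi_j/\pi_{j+1},\qquad(x\pi_2,\,y\pi_j)\mapsto[x,y]\pi_{j+1},
\]
is well defined and $\bbz$-bilinear --- this comes from the identities $[ab,c]\equiv[a,c][b,c]$ and $[a,bc]\equiv[a,b][a,c]$ modulo $\pi_{j+1}$, valid because $\pi_j/\pi_{j+1}$ is central in $\pi/\pi_{j+1}$ --- and its image generates $\pi_j/\pi_{j+1}$ since $\pi_j=[\pi,\pi_{j-1}]$. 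Finally, each $\pi_j$ is finitely generated (being a subgroup of the finitely generated nilpotent group $\pi$), so each $\pi_j/\pi_{j+1}$ is a finitely generated abelian group.

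I would first choose a finite set $T_1\subset\pi$ with $p_1(T_1)$ an independent generating set of $\pi/\pi_2$, which exists by the structure theorem for finitely generated abelian groups; by the nilpotency fact, $T_1$ then generates $\pi$, giving (3). Suppose $T_1,\dots,T_{j-1}$ have been constructed so that $p_i(T_i)$ is an independent generating set of $\pi_i/\pi_{i+1}$ for $i<j$ and condition (2) holds for $1<i<j$. By bilinearity and surjectivity of the pairing, together with the fact that $p_1(T_1)$ and $p_{j-1}(T_{j-1})$ generate $\pi/\pi_2$ and $\pi_{j-1}/\pi_j$, the commutators $\{[\tau_{1i},\tau_{j-1,\ell}]\}$ have $p_j$-image generating $\pi_j/\pi_{j+1}$. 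One then wants to take $T_j$ to be a subfamily of these commutators whose $p_j$-image is an \emph{independent} generating set, so that (2) holds by construction.

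The main obstacle is precisely this last step: a subset of a generating set of a finitely generated abelian group need not be an independent generating set (witness $\{2,3\}$ in $\bbz$), so one cannot simply discard redundant commutators. The fix is to choose the bases level-by-level \emph{in a way adapted to the pairing}. Concretely, I would not freeze $T_1$ at the start but allow modifying it --- and, as the recursion proceeds, the earlier $T_i$ --- by integral changes of basis: since the induced map $\pi/\pi_2\otimes_{\bbz}\pi_{j-1}/\pi_j\to\pi_j/\pi_{j+1}$ is onto, a Smith normal form / elementary divisors argument produces bases $\{v_i\}$ of $\pi/\pi_2$ and $\{w_\ell\}$ of $\pi_{j-1}/\pi_j$ for which a prescribed subfamily of the pure tensors $v_i\otimes w_\ell$ maps onto an independent generating set of $\pi_j/\pi_{j+1}$; lifting $\{v_i\}$ to a generating set of $\pi$ and pushing the change of basis of $\pi_{j-1}/\pi_j$ back through the recursion yields $T_1,\dots,T_c$. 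It is cleanest to first invoke the reduction already in the text --- that for computing $\GR(\phi)$ one may take $\pi$ torsion-free --- because then each $\pi_j/\pi_{j+1}$ is free abelian, ``independent generating set'' simply means ``$\bbz$-basis'', and the elementary-divisors bookkeeping is transparent; the general case follows by the same scheme while carrying the torsion summands along. Alternatively, one isolates a purely module-theoretic lemma (adapted bases for a surjective bilinear pairing of finitely generated abelian groups) strong enough that the whole recursion on $j=1,\dots,c$ goes through without backtracking, which is the route I expect to be the real content of the argument.
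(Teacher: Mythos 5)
The paper does not actually prove this lemma; it is quoted verbatim from Wolf (\cite[Lemma~3.7]{Wolf68}), so there is no internal argument to compare against. Judging your proposal on its own terms: you correctly isolate the genuine obstacle, namely that the commutators $[\tau_{1i},\tau_{j-1,\ell}]$ only hand you a \emph{generating} set of $\pi_j/\pi_{j+1}$, and extracting an \emph{independent} generating set that still consists of commutators of the prescribed form is where all the work lies. But the fix you offer does not close this gap. Smith normal form lets you change bases of the domain and codomain of a map of free $\bbz$-modules by \emph{arbitrary} elements of $\GL_p(\bbz)\times\GL_q(\bbz)$; here the domain is a tensor product $A\otimes B$ with $A=\pi/\pi_2$ and $B=\pi_{j-1}/\pi_j$, and the only changes of basis available to you are those coming from $\GL(A)\times\GL(B)$ acting diagonally by $v\otimes w\mapsto gv\otimes hw$ --- a proper and comparatively small subgroup of $\GL(A\otimes B)$ once $\rk A,\rk B\ge 2$. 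There is no elementary-divisors normal form for this restricted action, and it is not automatic that for every surjection $A\otimes B\to C$ of finitely generated abelian groups one can choose bases of $A$ and $B$ so that some subfamily of the pure tensors $v_i\otimes w_\ell$ maps to an independent generating set of $C$. That assertion is exactly the ``module-theoretic lemma'' you gesture at in your last sentence, and you have not proved it; it is the real content of the argument.

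The ``push the change of basis back through the recursion'' step hides a second difficulty. If you replace the basis of $\pi_{j-1}/\pi_j$ by a new one $\{w_\ell\}$, then for condition (2) to survive at level $j-1$ each $w_\ell$ must again be representable as $[\tau_{1i},\tau_{j-2,m}]$ for possibly new choices of $T_1$ and $T_{j-2}$. An arbitrary element of $\GL(\pi_{j-1}/\pi_j)$ need not be realizable through the pairing at level $j-1$, since the induced homomorphism $\GL(\pi/\pi_2)\times\GL(\pi_{j-2}/\pi_{j-1})\to\GL(\pi_{j-1}/\pi_j)$ is generally far from surjective; and re-choosing $T_1$ for the sake of level $j$ can in turn disturb the independence you arranged at every earlier level, so it is not even clear the backtracking terminates. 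Until you state and prove the adapted-basis statement for the commutator pairing (or replace this bottom-up scheme by a different one, e.g.\ a top-down construction or an argument through a Mal'cev basis), what you have is a plausible plan, not a proof.
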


Let $G$ be the Malcev completion of a finitely generated torsion-free nilpotent group
and let $\phi$ be an endomorphism of $\pi$.
Then $\phi$ extends uniquely to a Lie group homomorphism $D$ of $G$, called the Malcev completion of $\phi$.
We call its differential $D_*$ the {\bf linearization} of $\phi$.

\begin{Thm}\label{GR-nilp}
Let $\phi:\pi\to\pi$ be an endomorphism on a finitely generated torsion-free nilpotent group $\pi$.
Let $G$ be the Malcev completion of $\pi$. Then the linearization $D_*:\frakG\to \frakG$ of $\phi$ can be expressed as a lower triangular block matrix with diagonal blocks $\left\{D_j\right\}$ so that
$$
\GR(\phi)=\max_{j\ge1}\ \left\{\sp(D_j)^{1/j}\right\}.
$$
In particular, $\GR(\phi)$ is an algebraic integer.
\end{Thm}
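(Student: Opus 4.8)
The plan is to combine the filtration results from \cite{B78} recalled in Section~\ref{nilpotent} with the adapted central series and the Malcev theory, reducing everything to the abelian case where $\GR$ of an integer endomorphism is the spectral radius. First I would replace the lower central series $\{\pi_j\}$ by the adapted central series $\{\sqrt{\pi_j}\}$. Each successive quotient $A_j := \sqrt{\pi_j}/\sqrt{\pi_{j+1}}$ is free abelian of rank $k_j$, and each $\sqrt{\pi_j}$ is characteristic, hence $\phi$-invariant; since $[\sqrt{\pi_j}:\pi_j]<\infty$, these are undistorted in the relevant subgroups, so by Lemma (Corollary~3.1 of \cite{FFK}) and the filtration formula, $\GR(\phi)=\max_{j}\{\GR(\tilde\phi_j)^{1/j}\}$ where $\tilde\phi_j$ is the induced endomorphism on $A_j\cong\bbz^{k_j}$. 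Actually, to get the exact exponent $1/j$ one should be slightly careful: I would invoke the precise statement from \cite{B78} for the lower central series ($\GR(\phi)=\max_j\GR(\bar\phi_j)^{1/j}$) and then observe that passing from $\pi_j/\pi_{j+1}$ to its torsion-free quotient $A_j$ (killing the finite torsion subgroup, which is $\phi$-invariant) does not change the growth rate, by Lemma~\ref{finite}(2) together with the fact that an eventually trivial quotient contributes nothing.

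Next I would pass to the linearization. Let $G$ be the Malcev completion of $\pi$ with Lie algebra $\frakG$, and let $D\colon G\to G$ be the Malcev completion of $\phi$, with $D_* \colon \frakG\to\frakG$ its differential. The adapted central series of $\pi$ corresponds to the descending central series $\frakG=\frakG_1\supset\frakG_2\supset\cdots$ of $\frakG$, with $\frakG_j/\frakG_{j+1}\cong A_j\otimes\bbr$ of dimension $k_j$; these are all $D_*$-invariant ideals since $D_*$ is a Lie algebra endomorphism. Choosing a basis of $\frakG$ adapted to this filtration, $D_*$ becomes lower-triangular in block form with diagonal blocks $D_j\colon \frakG_j/\frakG_{j+1}\to\frakG_j/\frakG_{j+1}$. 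The key compatibility point is that $D_j$ is exactly the real linearization of $\tilde\phi_j\colon \bbz^{k_j}\to\bbz^{k_j}$, i.e. $D_j$ is (conjugate to) the integer matrix of $\tilde\phi_j$; this follows from functoriality of Malcev completion and the fact that on a finitely generated free abelian group the completion is just $\otimes\bbr$ and the induced map is the same integer matrix. Hence $\sp(D_j)=\sp(\tilde\phi_j\text{'s matrix})=\GR(\tilde\phi_j)$ by the fourth bullet in Section~\ref{prelim}. Combining with the previous paragraph gives $\GR(\phi)=\max_{j\ge1}\sp(D_j)^{1/j}$.

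Finally, for the "algebraic integer" claim: each $D_j$ is an integer matrix, so its eigenvalues are algebraic integers, hence $\sp(D_j)$ — being the modulus of an algebraic integer, equivalently $\sqrt{\lambda\bar\lambda}$ for an eigenvalue $\lambda$ — is an algebraic number; and $\sp(D_j)^{1/j}$ is then a root of a monic integer polynomial (a root of $x^{j\cdot\deg}-(\text{char poll stuff})$ argument), hence still an algebraic integer, and the maximum of finitely many algebraic integers is algebraic. Strictly $\sp(D_j)$ itself need not be an algebraic \emph{integer} (it can be $\sqrt2$, which is, but e.g. a modulus of a complex algebraic integer is an algebraic integer anyway since it satisfies the resultant polynomial of $p(x)$ and $x^{2\deg p}p(1/x)\cdots$); I would just argue that moduli of algebraic integers are algebraic integers and $j$-th roots of algebraic integers are algebraic integers, so the result follows. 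I expect the main obstacle to be the bookkeeping in the second paragraph: verifying carefully that the block-diagonal pieces of the linearization $D_*$ genuinely coincide with the linearizations of the abelianized-by-stages maps $\tilde\phi_j$, i.e. that the Malcev/linearization functor is compatible with the adapted central filtration — once that is pinned down, everything else is assembling already-established lemmas.
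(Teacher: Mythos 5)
Your proposal is correct and follows essentially the same route as the paper's proof: use the adapted central series, invoke Bowen's formula $\GR(\phi)=\max_{j}\GR(\bar\phi_j)^{1/j}$ for the lower central series, identify the diagonal blocks $D_j$ of the linearization with the integer matrices induced on the free layers $\sqrt{\pi_j}/\sqrt{\pi_{j+1}}$, and conclude via $\GR=\sp$ on $\bbz^{k_j}$. The only slight imprecision is that $A_j=\sqrt{\pi_j}/\sqrt{\pi_{j+1}}$ is not literally the torsion-free quotient of $\pi_j/\pi_{j+1}$ but only commensurable to it; the paper resolves this by pinning down the generating sets $S_j\subset T_j$ and $\calS_j$ together with the relations $\sigma_j^{\ell_j}\equiv\tau_{j*}\ (\mathrm{mod}\ \sqrt{\pi_{j+1}})$, which show the two integer matrices are conjugate over $\bbq$ and therefore share the same eigenvalues.
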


\begin{proof}
Let $\pi$ be a finitely generated torsion-free $c$-step nilpotent group with adapted central series
$$
\pi=\sqrt{\pi_1}\supset\sqrt{\pi_2}\supset\cdots\supset\sqrt{\pi_c}\supset\sqrt{\pi_{c+1}}=1.
$$
Let $q_j:\sqrt{\pi_j}\to\sqrt{\pi_j}/\sqrt{\pi_{j+1}}$ denotes the projection.
We choose $\left\{T_1,\cdots,T_c\right\}$ as in Lemma~\ref{W-basis}.
Since $\pi_2$ is a fully invariant, finite index subgroup of $\sqrt{\pi_2}$, it induces a short exact sequence
$$
1\lra\sqrt{\pi_2}/\pi_2\lra\pi_1/\pi_2\lra\pi_1/\sqrt{\pi_2}=\sqrt{\pi_1}/\sqrt{\pi_2}\lra1.
$$
Since $\sqrt{\pi_2}/\pi_2$ is finite, it follows that $\sqrt{\pi_1}/\sqrt{\pi_2}\cong\bbz^{k_1}$
can be regarded as the free part of the finitely generated Abelian group $\pi_1/\pi_2$.
Hence we can choose $S_1\subset T_1$ such that $p_1(S_1)$ is an independent set of free generators of $\sqrt{\pi_1}/\sqrt{\pi_2}$ and $p_1(T_1-S_1)$ is an independent set of torsion generators of ${\pi_1}/{\pi_2}$.

Next we consider the short exact sequence
$$
1\lra\sqrt{\pi_3}/\pi_3\lra\sqrt{\pi_2}/\pi_3\lra\sqrt{\pi_2}/\sqrt{\pi_3}\lra1.
$$
Since $\pi_2/\pi_3\subset\sqrt{\pi_2}/\pi_3$, we obtain the following commutative diagram between exact sequences
$$
\CD
1@>>>\sqrt{\pi_3}/\pi_3@>>>\sqrt{\pi_2}/\pi_3@>>>\sqrt{\pi_2}/\sqrt{\pi_3}\cong\bbz^{k_2}@>>>1\\
@.@AAA@AAA@AAA\\
1@>>>(\pi_2\cap\sqrt{\pi_3})/\pi_3@>>>{\pi_2}/\pi_3@>>>\begin{matrix}{\pi_2}/(\pi_2\cap\sqrt{\pi_3})\\
\quad=\pi_2\cdot\sqrt{\pi_3}/\sqrt{\pi_3}\end{matrix}@>>>1
\endCD
$$
where all vertical maps are inclusions of finite index. Hence we can choose $S_2\subset T_2$
such that $p_2(S_2)$ is an independent set of free generators of the free Abelain group
$(\pi_2\cdot\sqrt{\pi_3})/\sqrt{\pi_3}$ and $p_2(T_2-S_2)$ is an independent set of torsion generators of ${\pi_2}/{\pi_3}$. Note that $S_2\subset\pi_2\subset\sqrt{\pi_2}$.
Because the right-most vertical inclusion is finite index, we can choose $\calS_2\subset\sqrt{\pi_2}$
such that $q_2(\calS_2)$ is an independent set of free generators of
$\sqrt{\pi_2}/\sqrt{\pi_3}$, and for each $\sigma_2\in \calS_2$ there are unique $\ell_2\ge1$
and unique $\tau_{2*}\in S_2$
such that ${\sigma_2}^{\ell_2}=\tau_{2*}$ modulo $\sqrt{\pi_3}$.  Remark also that $\#S_2=\#\calS_2$.

Continuing in this way, we obtain $\left\{S_1,\cdots,S_c\right\}\subset\left\{T_1,\cdots,T_c\right\}$ and $\left\{\calS_1,\cdots,\calS_c\right\}$ such that
\begin{itemize}
\item $S_j\subset T_j$, $\#S_j=\#\calS_j$,
\item $p_j(S_j)$ is an independent set of free generators of ${\pi_j}/{\pi_{j+1}}$,
\item $p_j(T_j-S_j)$ is an independent set of torsion generators of ${\pi_j}/{\pi_{j+1}}$,
\item $q_j(\calS_j)$ is an independent set of free generators of
$\sqrt{\pi_j}/\sqrt{\pi_{j+1}}$,
\item for each $\sigma_j\in \calS_j\subset\sqrt{\pi_j}$, there exist unique $\ell_j\ge1$ and $\tau_{j*}\in S_j$
such that ${\sigma_j}^{\ell_j}=\tau_{j*}\mod\sqrt{\pi_{j+1}}$.
\end{itemize}

The adapted central series of $\pi$ allows us to choose a {\bf preferred basis} $\bfa$ of $\pi$;
we can choose $\bfa$ to be $\left\{\calS_1,\cdots,\calS_c\right\}$ so that it generates $\pi$
and $\pi$ can be embedded as a lattice of a connected simply connected nilpotent Lie group $G$,
the Malcev completion of $\pi$.
Its Lie algebra $\frakG$ has a linear basis $\log\bfa=\left\{\log \calS_1,\cdots,\log \calS_c\right\}$.
From ${\sigma_j}^{\ell_j}=\tau_{j*}\mod\sqrt{\pi_{j+1}}$, we have
\begin{align}\label{B}
\ell_j\log(\sigma_j)=\log({\sigma_j}^{\ell_j})=\log(\tau_{j*}) \mod \gamma_{j+1}(\frakG).\tag{B}
\end{align}
This induces that $\left\{\log S_1,\cdots,\log S_c\right\}$ is also a linear basis of $\frakG$.

Let $\phi:\pi\to\pi$ be an endomorphism. Then $\phi$ induces endomorphisms
$$
\phi_j:\pi_j\to\pi_j,\
\hat\phi_j:\pi/\pi_j\to\pi/\pi_j,\
\bar\phi_j:\pi_j/\pi_{j+1}\to\pi_j/\pi_{j+1},
$$
and
$$
\varphi_j:\sqrt{\pi_j}\to\sqrt{\pi_j},\
\hat{\varphi}_j:\pi/\sqrt{\pi_j}\to\pi/\sqrt{\pi_j},\
\bar{\varphi}_j:\sqrt{\pi_j}/\sqrt{\pi_{j+1}}\to\sqrt{\pi_j}/\sqrt{\pi_{j+1}}.
$$
Moreover, any endomorphism $\phi$ on $\pi$ extends uniquely
to a Lie group endomorphism $D$ on $G$, called the Malcev completion of $\phi$.
With respect to the preferred basis $\log\bfa$ of the Lie algebra $\frakG$ of $G$,
we can express the linearization $D_*$ of $\phi$ as a lower triangular block matrix,
each diagonal block $D_j$ is an integer matrix representing the endomorphism
$\bar\varphi_j:\sqrt{\pi_j}/\sqrt{\pi_{j+1}}\cong\bbz^{k_j}\to\sqrt{\pi_j}/\sqrt{\pi_{j+1}}\cong\bbz^{k_j}$.
For details, we refer to for example \cite{KL05}.
When the new basis $\left\{\log S_1,\cdots,\log S_c\right\}$ is used
instead of $\log\bfa$, the integer entries of block matrices $D_j$ will be changed to rational entries
because of the identities \eqref{B}, but the eigenvalues of $D_j$ will be unchanged.
This means that whenever the eigenvalues of $D_*$ are concerned, we may assume that $\pi_j/\pi_{j+1}$ is torsion-free, or $\pi_j=\sqrt{\pi_j}$,
or we may take tensor with $\bbr$.
Thus,
$$
\GR(\bar\phi_j)=\sp(D_j)=\GR(\bar\varphi_j).
$$
Now the theorem follows from $\GR(\phi)= \max_{j\ge1}\left\{\GR(\bar\phi_j)^{1/j}\right\}$.
\end{proof}

\begin{Example}\label{Ex1}
Let $\Nil$ be the $3$-dimensional Heisenberg group. That is,
$$
\Nil=\left\{\left[\begin{matrix}%
1&x&z\\0&1&y\\0&0&1\end{matrix}\right] \Big| \,
x,y,z\in\bbr\right\}.
$$
Consider the subgroups $\Gamma_k$, $k\in\bbn$, of $\Nil$,
$$
\Gamma_k=\left\{\left[\begin{matrix}1&n&\frac{\ell}{k}\\
0&1&m\\0&0&1\end{matrix}\right]\Big|\,\, m,n,\ell\in\bbz\right\}.
$$
These are lattices of $\Nil$, and every lattice of $\Nil$ is isomorphic to some $\Gamma_k$. Let
$$
a_1=\left[\begin{matrix}1&0&0\\0&1&1\\0&0&1\end{matrix}\right],\
a_2=\left[\begin{matrix}1&1&0\\0&1&0\\0&0&1\end{matrix}\right],\
a_3=\left[\begin{matrix}1&0&\frac{1}{k}\\0&1&0\\0&0&1\end{matrix}\right].
$$
Then $S=\left\{a_1,a_2,a_3\right\}$ is a generating set of $\Gamma_k$ satisfying $[a_1,a_2]=a_3^{-k}, [a_1,a_3]=[a_2,a_3]=1$,
and in fact
$$
\left[\begin{matrix}1&n&\frac{\ell}{k}\\0&1&m\\0&0&1\end{matrix}\right]=a_1^m a_2^na_3^\ell.
$$

Let $\pi=\Gamma_k=\langle a_1,a_2,a_3\mid [a_1,a_2]=a_3^{-k},[a_1,a_3]=[a_2,a_3]=1\rangle$.
Let $\pi'=\langle a_3\rangle$ and $S'=\left\{a_3\right\}$. Since $(a_3^{-k})^{n^2}=[a_1^n,a_2^n]$,
we have $L((a_3^{-k})^{n^2},S')=kn^2$ and $L((a_3^{-k})^{n^2},S)=4n$. Hence $$
L((a_3^{-k})^{n^2},S')>L((a_3^{-k})^{n^2},S)
$$
for all $n$ with $n>4/k$. It follows that $\pi'$ is distorted.

Consider any endomorphism $\phi:\pi\to\pi$. Then $\phi$ must be of the form
$$
\phi(a_1)=a_1^{m_{11}}a_2^{m_{21}}a_3^{p},\
\phi(a_2)=a_1^{m_{12}}a_2^{m_{22}}a_3^{q},\
\phi(a_3)=a_3^{m_{11}m_{22}-m_{12}m_{21}}.
$$
We will compute $\GR(\phi)$. The lower central series of $\pi$ is $\pi=\pi_1\supset\pi_2=\langle a_3^k\rangle$,
and its adapted central series is $\pi=\pi_1\supset\sqrt{\pi_2}=\langle a_3\rangle$.
We observe that $T_1=\left\{a_1,a_2,a_3\right\}$ and $T_2=\left\{a_3^k\right\}$ are sets satisfying the conditions of Lemma~\ref{W-basis}.
Then we can see that $S_1=\left\{a_1,a_2\right\}\subset T_1$ and $S_2=\left\{a_3^k\right\}\subset T_2$, and $S_1'=\left\{a_1,a_2\right\}$ and $S_2'=\left\{a_3\right\}$.
Further, $\left\{S_1',S_2'\right\}=\left\{a_1,a_2,a_3\right\}$ is a preferred basis for $\pi$.
The linearization of $\phi$ with respect to this preferred basis has two integer blocks $D_1$ and $D_2$,
where
$$
D_1=\left[\begin{matrix}m_{11}&m_{12}\\m_{21}&m_{22}\end{matrix}\right],\quad
D_2=\left[\begin{matrix}m_{11}m_{22}-m_{12}m_{21}\end{matrix}\right]
=\left[\begin{matrix}\det(D_1)\end{matrix}\right].
$$
By Theorem~\ref{GR-nilp}, we have $\GR(\phi)=\max\left\{\sp(D_1),\sp(D_2)^{1/2}\right\}$.
Let $\mu,\nu$ be the eigenvalues of $D_1$. Then
$$
\GR(\phi)=\max\left\{|\mu|,|\nu|,\sqrt{|\mu\nu|}\right\}=\max\left\{|\mu|,|\nu|\right\}=\sp(D_1).
$$
In fact, we will show in Theorem~\ref{Ko type} that it is always the case that $\GR(\phi)=\sp(D_1)$.
\end{Example}

We consider another example in which we obtain much information about linearizations of endomorphisms
and then we obtain an idea of proving the next result, Theorem~\ref{Ko type}.

\begin{Example}\label{Ex3}
Consider a {$2$-step} torsion-free nilpotent group $\pi$ generated by
$$
\tau_1,\tau_2,\tau_3,\sigma_{12},\sigma_{13}
$$
satisfying the relations
\begin{align*}
&[\tau_1,\tau_2]=\sigma_{12},[\tau_1,\tau_3]=\sigma_{13},
[\tau_2,\tau_3]=\sigma_{12}^m\sigma_{13}^{n},
[\tau_i,\sigma_{jk}]=[\sigma_{12},\sigma_{13}]=1.
\end{align*}
Since $\pi_2=\langle \sigma_{12},\sigma_{13}\rangle=\bbz^2$
and $\pi/\pi_2=\langle \bar\tau_1,\bar\tau_2,\bar\tau_3\rangle=\bbz^3$,
it follows that the set $\left\{T_1,T_2\right\}=\left\{\tau_1,\tau_2,\tau_3,\sigma_{12},\sigma_{13}\right\}$
satisfies the conditions of Lemma~\ref{W-basis}
and forms a preferred basis of our group $\pi$.
Let $\phi$ be an endomorphism of $\pi$. A direct computation shows that if
\begin{align*}
&\phi(\tau_i)=\tau_1^{d_{1i}}\tau_2^{d_{2i}}\tau_3^{d_{3i}}\mod\pi_2,
\end{align*}
i.e., if the first block of the linearization of $\phi$ is
$$
D_1=\left[\begin{matrix}
d_{11}&d_{12}&d_{13}\\
d_{21}&d_{22}&d_{23}\\
d_{31}&d_{32}&d_{33}\end{matrix}\right],
$$
then with $\sigma_{23}=[\tau_2,\tau_3]$, we have
\begin{align*}
&\phi(\sigma_{12})
=\sigma_{12}^{M_{33}}\sigma_{13}^{M_{23}}\sigma_{23}^{M_{13}},\
\phi(\sigma_{13})
=\sigma_{12}^{M_{32}}\sigma_{13}^{M_{22}}\sigma_{23}^{M_{12}},\
\phi(\sigma_{23})
=\sigma_{12}^{M_{31}}\sigma_{13}^{M_{21}}\sigma_{23}^{M_{11}},
\end{align*}
where $M_{ij}$ denote the $(i,j)$-minor of $D$.
These yield a matrix
$$
K=\left[\begin{matrix}M_{33}&M_{32}&M_{31}\\
M_{23}&M_{22}&M_{21}\\M_{13}&M_{12}&M_{11}
\end{matrix}\right]={\bigwedge}^2 \left(D_1\right),
$$
the second exterior power of $D_1$.
On the other hand, since $\sigma_{23}=\sigma_{12}^m\sigma_{13}^n$, we have
\begin{align}
\label{1}
&\phi(\sigma_{12})=\sigma_{12}^{M_{33}}\sigma_{13}^{M_{23}}\sigma_{23}^{M_{13}}
=\sigma_{12}^{M_{33}+mM_{13}}\sigma_{13}^{M_{23}+nM_{13}},\\
\label{2}
&\phi(\sigma_{13})=\sigma_{12}^{M_{32}}\sigma_{13}^{M_{22}}\sigma_{23}^{M_{12}}
=\sigma_{12}^{M_{32}+mM_{12}}\sigma_{13}^{M_{22}+nM_{12}},\\
\label{3}
&\phi(\sigma_{23})=\phi(\sigma_{12})^m\phi(\sigma_{13})^n
=\sigma_{12}^{M_{31}}\sigma_{13}^{M_{21}}\sigma_{23}^{M_{11}}
=\sigma_{12}^{M_{31}+mM_{11}}\sigma_{13}^{M_{21}+nM_{11}}.
\end{align}
From \eqref{1} and \eqref{2}, the second block of the linearization of $\phi$ is
$$
D_2=\left[\begin{matrix}M_{33}+mM_{13}&M_{32}+mM_{12}\\
M_{23}+nM_{13}&M_{22}+nM_{12}
\end{matrix}\right].
$$
Plugging \eqref{1} and \eqref{2} into \eqref{3}, we have
\begin{align}
\label{4}
\begin{cases}\left[\begin{matrix}M_{31}\\M_{21}\\M_{11}\end{matrix}\right]
=m\left[\begin{matrix}M_{33}\\M_{23}\\M_{13}\end{matrix}\right]
+n\left[\begin{matrix}M_{32}\\M_{22}\\M_{12}\end{matrix}\right]&
\text{when $(m,n)\ne(0,0)$,}\\
\left[\begin{matrix}M_{31}\\M_{21}\end{matrix}\right]
=\left[\begin{matrix}0\\0\end{matrix}\right]&\text{when $m=n=0$.}
\end{cases}
\end{align}
When $(m,n)\ne(0,0)$, because of \eqref{4}, $K$ is column equivalent to the matrix $K'$ with  the zero third column,
and then by doing some row operations on $K'$ we can see that $K'$ is row equivalent to the matrix $K''$
where
$$
K\rightsquigarrow K'=\left[\begin{matrix}M_{33}&M_{32}&0\\
M_{23}&M_{22}&0\\M_{13}&M_{12}&0
\end{matrix}\right]
\rightsquigarrow
K''=\left[\begin{BMAT}{ccc.c}{ccc.c}
M_{33}+mM_{13}&M_{32}+mM_{12}&&0\\
M_{23}+nM_{13}&M_{22}+nM_{12}&&0\\&&&\\
M_{13}&M_{12}&&0
\end{BMAT}\right].
$$
Thus the second block $D_2$ of the linearization $D_*$ is a block submatrix of $K''$.
This is obtained by removing the row and column of $K''$
that are determined by \eqref{3}
or by the relation $[\tau_2,\tau_3]=\sigma_{12}^m\sigma_{13}^{n}$.
Note also that $K$, $K'$ and $K''$ have the same eigenvalues which are
$0$ and the eigenvalues of $D_2$.
When $(m,n)=(0,0)$, because of \eqref{4}, we have
$$
K=\left[\begin{BMAT}{ccc.c}{ccc.c}
M_{33}&M_{32}&&0\\
M_{23}&M_{22}&&0\\&&&\\M_{13}&M_{12}&&M_{11}
\end{BMAT}\right].
$$
Thus $D_2$ of $D_*$ is a block submatrix of $K$, and
$K$ has $M_{11}$ and the eigenvalues of $D_2$ as its eigenvalues.

On the other hand, if $\mu_1,\mu_2,\mu_3$ are the eigenvalues of $D_1$, then as $K=\bigwedge^2(D_1)$, the eigenvalues of $K$ are $\mu_i\mu_j$ $(i<j)$. Consequently, we have
$$
\sp(D_1)=\max_{i=1,2,3}\left\{|\mu_i|\right\}\ge\max_{i\ne j}\left\{\sqrt{|\mu_i\mu_j|}\right\}=\sp(K)\ge\sp(D_2)^{1/2}.
$$
This proves that $\GR(\phi)=\sp(D_1)$.
\end{Example}

The following result was proved in \cite{Koberda} when $\phi$ is an automorphism
using the intrinsic polynomial structure of nilpotent groups.
We will now improve {\cite[Theorem~1.2]{Koberda}} from automorphisms to endomorphisms by using completely different arguments.


\begin{Thm}\label{Ko type}
Let $\phi:\pi\to\pi$ be an endomorphism on a finitely generated torsion-free nilpotent group $\pi$
with Malcev completion $D$. 
Then
$$
\GR(\phi)= \GR(\phi_{\mathrm{ab}}),
$$
where $\phi_{\mathrm{ab}}:\pi/[\pi,\pi]\to\pi/[\pi,\pi]$ be the endomorphism induced by $\phi$.
{Hence $\GR(\phi)=\sp(D_1)\le\sp(D_*)$.}
\end{Thm}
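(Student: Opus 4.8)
The plan is to reduce the statement $\GR(\phi)=\GR(\phi_{\mathrm{ab}})$ to the comparison, for each level $j$ of the adapted central series, between the spectral radius of the diagonal block $D_j$ of the linearization and the $j$-th power of the spectral radius of the top block $D_1$. By Theorem~\ref{GR-nilp} we already know $\GR(\phi)=\max_{j\ge1}\sp(D_j)^{1/j}$, and $\GR(\phi_{\mathrm{ab}})=\sp(D_1)$ is the $j=1$ term (since, tensoring with $\bbr$, the abelianization of $\pi$ has free part $\pi/\sqrt{\pi_2}\cong\bbz^{k_1}$ and $\phi_{\mathrm{ab}}$ acts there as $D_1$). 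So it suffices to prove
$$
\sp(D_j)^{1/j}\le\sp(D_1)\qquad\text{for all }j\ge1,
$$
i.e. $\sp(D_j)\le\sp(D_1)^j$.

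The key step is to realize $D_j$ (up to eigenvalues, hence up to spectral radius) as a subquotient of a map built functorially from $D_1$ by multilinear algebra. Following the mechanism illustrated in Example~\ref{Ex3}: condition~(2) of Lemma~\ref{W-basis} says every generator $\tau_{jr}\in T_j$ is an iterated commutator $[\tau_{1i_1},[\tau_{1i_2},\cdots]]$ of length $j$ in the generators $T_1$. Working in the associated graded Lie algebra $\mathrm{gr}(\frakG)=\bigoplus_j \gamma_j(\frakG)/\gamma_{j+1}(\frakG)$, the degree-$j$ piece is a quotient of $\Lambda^{?}$— more precisely it is a quotient of the $j$-fold iterated bracket image, which is a quotient of a subspace of $(\gamma_1(\frakG)/\gamma_2(\frakG))^{\otimes j}$ via the iterated Lie bracket. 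The linearization $D_*$ respects the lower central series filtration, so on $\mathrm{gr}_j$ it is induced by $D_1^{\otimes j}$ restricted to the relevant subspace and passed to the relevant quotient. Therefore every eigenvalue of $D_j=\bar\varphi_j\otimes\bbr$ is a product $\mu_{i_1}\mu_{i_2}\cdots\mu_{i_j}$ of $j$ eigenvalues of $D_1$ (with repetition allowed). Consequently
$$
\sp(D_j)\le\max\bigl\{|\mu_{i_1}\cdots\mu_{i_j}|\bigr\}\le\sp(D_1)^j,
$$
which is exactly what we need; taking $j$-th roots and the max over $j$ gives $\GR(\phi)=\sp(D_1)=\GR(\phi_{\mathrm{ab}})$, and since $\sp(D_1)\le\sp(D_*)$ trivially (each $D_j$, in particular $D_1$, is a diagonal block of $D_*$), the final clause follows.

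The main obstacle is making the phrase "every eigenvalue of $D_j$ is a product of $j$ eigenvalues of $D_1$" precise and correct: one must argue carefully that passing from the abelianization-level action $D_1$ through the iterated bracket to $\mathrm{gr}_j(\frakG)$ only ever produces eigenvalues that are $j$-fold products of eigenvalues of $D_1$, even though $\mathrm{gr}_j$ is a genuine subquotient (not all of a tensor power) and the relations among commutators (as in the $\sigma_{23}=\sigma_{12}^m\sigma_{13}^n$ relation of Example~\ref{Ex3}) may collapse or identify basis vectors. The clean way is: the iterated-bracket map $\Lie_j\colon$ (degree-$j$ part of the free Lie algebra on $\gamma_1(\frakG)/\gamma_2(\frakG)$) $\twoheadrightarrow \gamma_j(\frakG)/\gamma_{j+1}(\frakG)$ is surjective and $D_*$-equivariant, where $D_*$ acts on the source through $D_1$ in each of the $j$ tensor slots; hence $D_j$ is a quotient of the restriction of $D_1^{\otimes j}$ to $\Lie_j$, and any quotient of a restriction of $D_1^{\otimes j}$ has eigenvalues among those of $D_1^{\otimes j}$, namely the $j$-fold products $\mu_{i_1}\cdots\mu_{i_j}$. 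This is exactly the $\bigwedge^2$ computation of Example~\ref{Ex3} done in general (with $\Lambda$ replaced by the free-Lie-algebra functor), and with it the bound $\sp(D_j)\le\sp(D_1)^j$ is immediate.
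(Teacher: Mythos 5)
Your proposal is correct, and it is structurally the same argument as the paper's: use Theorem~\ref{GR-nilp} and Lemma~\ref{W-basis} to reduce to the bound $\sp(D_j)\le\sp(D_1)^j$, by realizing $D_j$ as a subquotient of a multilinear construction applied to $D_1$. The difference is in how that multilinear construction is packaged. The paper, following Example~\ref{Ex3}, writes the action on the $j$-th graded piece as an explicit matrix $K=\left(\bigotimes_{j-2}D_1\right)\otimes\bigwedge^2D_1$ coming from left-normed iterated commutators, and handles the relations (words $\sigma_{ij}\equiv w(\sigma_{pq})$ and Jacobi collapses) by hand with row and column operations, exhibiting $D_j$ as a diagonal block of a triangular transform of $K$ and reading off eigenvalue containment directly. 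You instead observe that $\mathrm{gr}(\frakG)$ is generated in degree one, so the degree-$j$ part of the free Lie algebra on $\frakG/\gamma_2(\frakG)$ surjects $D_*$-equivariantly onto $\mathrm{gr}_j(\frakG)$, and that this degree-$j$ part is a $D_1^{\otimes j}$-invariant subspace of $V^{\otimes j}$; hence $D_j$ is a subquotient of $D_1^{\otimes j}$ and its eigenvalues are among the $j$-fold products $\mu_{i_1}\cdots\mu_{i_j}$. This buys conceptual economy: the functoriality of the free Lie algebra and the elementary fact that spectra shrink under passage to invariant subspaces and quotients replace the explicit bookkeeping, and you correctly flag and dispose of the only subtle point (that $\mathrm{gr}_j$ is a genuine subquotient, not the whole tensor power). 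One detail worth spelling out in a clean write-up is the identification $\GR(\phi_{\mathrm{ab}})=\sp(D_1)$: since $\phi_{\mathrm{ab}}$ acts on $\pi/[\pi,\pi]$, which can have torsion, one should invoke Lemma~\ref{finite} (or pass to $\pi/\sqrt{\pi_2}$) to see that only the free part $\bbz^{k_1}$, on which $\phi_{\mathrm{ab}}$ acts by $D_1$, contributes; the paper absorbs this into the discussion preceding Theorem~\ref{GR-nilp}.
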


\begin{proof}
Let $\pi$ be $c$-step and choose a family of finite sets $\left\{T_1,\cdots,T_c\right\}$ satisfying the conditions of Lemma~\ref{W-basis}.
As it was observed in the proof of Theorem~\ref{GR-nilp},
we can choose $\left\{S_1,\cdots,S_c\right\}$ such that each $S_j\subset T_j\subset\pi_j$ projects onto free generators of $\pi_j/\pi_{j+1}$ and a preferred basis $\left\{S_1', \cdots, S_c'\right\}$ of $\pi$ so that each block matrix $D_j$ of the linearization $D_*$ of $\phi$ which is determined by $\log S_j'$ may be assumed to be determined by $\log S_j$.

Indeed, for each $j$ with $1\le j\le c$, we write $S_j=\left\{\tau_{j1}, \cdots,\tau_{jk_j}\right\}\subset T_j$; then if $j>1$, every $\tau_{jr}$ is of the form $[\tau_{1i},\tau_{j-1,\ell}]$. For $1\le j\le c$, if
$$
\phi(\tau_{j\ell})=\tau_{j1}^{d^j_{1\ell}}\cdots\tau_{jk_j}^{d^j_{k_j\ell}}\ \text{ modulo $\pi_{j+1}$},
$$
then the $j$th block of the linearization $D_*$ of $\phi$ is
$$
D_j=\left[\begin{matrix}
d^j_{11}&\cdots&d^j_{1k_j}\\
\vdots&&\vdots\\
d^j_{k_11}&\cdots&d^j_{k_jk_j}
\end{matrix}\right].
$$

In order to compare first the eigenvalues of $D_1$ with those of $D_2$, {we use the following new notation:}  $D_1=[d_{ij}^1]=[d_{ij}]$, $\sigma_{ij}=[\tau_{1i},\tau_{1j}]$
for all $1\le i<j\le k_1$. Then $\sigma_{ij}=\tau_{2,\ell}^{\pm1}\in S_2$
for some $\ell$ or $\sigma_{ij}$ is an word of elements in $S_2^{\pm1}$ modulo $\pi_3$
(see the presentation of $\pi$ in Example~\ref{Ex3}).
Let $S=\left\{\sigma_{ij}\mid1\le i<j\le k_1\right\}$; then we may assume that $S_2\subset S$.
Further, $S_2$ differs from $S$ except possibly by $\sigma_{ij}$'s,
words of elements in $S_2^{\pm1}$ modulo $\pi_3$
(note in Example~\ref{Ex3} that $S_2=\left\{\sigma_{12},\sigma_{13}\right\}$
and $S=\left\{\sigma_{12},\sigma_{13},\sigma_{12}^m\sigma_{13}^n\right\}$).

Now we can express $\phi(\sigma_{ij})$ as follows:
\begin{align}\label{P}
\phi(\sigma_{ij})=\sigma_{12}^{M_{1,2}^{i,j}}\sigma_{13}^{M_{1,3}^{i,j}}
\cdots\sigma_{1k_1}^{M_{1,k_1}^{i,j}}\cdots\sigma_{k_1-1,k_1}^{M_{k_1-1,k_1}^{i,j}}\
\text{ modulo $\pi_3$}\tag{P}
\end{align}
{for some integers $M^{i,j}_{p,q}$. We denote by $K$ the $\binom{k_1}{2}\x\binom{k_1}{2}$ matrix {$\left[M^{i,j}_{p,q}\right]$}
$$
K=\left[\begin{matrix}
M_{1,2}^{1,2}&M_{1,2}^{1,3}&\cdots&M_{1,2}^{k_1-1,k_1}\\
M_{1,3}^{1,2}&M_{1,3}^{1,3}&\cdots&M_{1,3}^{k_1-1,k_1}\\
\vdots&\vdots&&\vdots\\
M_{k_1-1,k_1}^{1,2}&M_{k_1-1,k_1}^{1,3}&\cdots&M_{k_1-1,k_1}^{k_1-1,k_1}
\end{matrix}\right].
$$
We will refer to the column vector
$\left({M_{1,2}^{i,j}},{M_{1,3}^{i,j}},\cdots,{M_{1,k_1}^{i,j}},\cdots,{M_{k_1-1,k_1}^{i,j}}\right)^t$
of $K$ as the $(i,j)$-column of $K$.
Remark that:
\begin{enumerate}
\item[(i)] For any $\sigma_{ij}\in S$, $M_{p,q}^{i,j}$ is unique for which $\sigma_{pq}\in S_2$.
\item[(ii)] If $\sigma_{ij}\in S-S_2$, then $\sigma_{ij}$ is an word $w$ of elements in $S_2^{\pm1}$ modulo $\pi_3$. If $w\ne1$ modulo $\pi_3$, the $(i,j)$-column of $K$ is an integer combination of $(p,q)$-columns of $K$ corresponding to the elements $\sigma_{pq}$ appearing in the word $w$. If $w\equiv1$, then
$M_{p,q}^{i,j}=0$ for which $\sigma_{pq}\in S_2$.
\item[(iii)] The right-hand side of the expression \eqref{P} can be rewritten
in terms of only the elements of $S_2$ using the words $\sigma_{ij}\equiv w(\sigma_{pq})$.
This yields the second block $D_2$.
\end{enumerate}}

Since $\sigma_{ij}=[\tau_{1i},\tau_{1j}]$, taking $\phi$ on both sides,
we have (see \cite[Lemma~4.1]{FFK} or \cite[p.~\!93, Lemma~4.1]{Passman})
\begin{align}\label{R}
\sigma_{12}^{M_{1,2}^{i,j}}\sigma_{13}^{M_{1,3}^{i,j}}\cdots\sigma_{1k_1}^{M_{1,k_1}^{i,j}}
\cdots\sigma_{k_1-1,k_1}^{M_{k_1-1,k_1}^{i,j}}
&=[\tau_{11}^{d_{1,i}}\cdots\tau_{1k_1}^{d_{k_1,i}},\tau_{11}^{d_{1,j}}\cdots\tau_{1k_1}^{d_{k_1,j}}]\notag\\
&=\prod_p\prod_q[\tau_{1p}^{d_{p,i}},\tau_{1q}^{d_{q,j}}]
=\prod_p\prod_q[\tau_{1p},\tau_{1q}]^{d_{p,i}d_{q,j}}\tag{R}\\
&=\prod_{1\le p<q\le k_1}\sigma_{pq}^{d_{p,i}d_{q,j}-d_{p,j}d_{q,i}}\ \text{ modulo $\pi_3$}.\notag
\end{align}
This shows that $K$ is the second exterior power of the matrix $D_1$, i.e., $K=\bigwedge^2(D_1)$.
Hence, if $\mu_i$ ($1\le i\le k_1$) are the eigenvalues of $D_1$, then $\mu_i\mu_j\ (i< j)$
are the eigenvalues of $K$.

From part (ii) of the above remarks, we see that $K$ is column equivalent to the matrix $K'$ with zero $(i,j)$-column for which $\sigma_{ij}=w(\sigma_{pq})\ne1$ modulo $\pi_3$. We rearrange the elements of $S$ so that
$S=S_2\cup(S-S_2)=S_2\cup S_2^1\cup S_2^2$ where $S_2^1=\left\{\sigma_{ij}\in S-S_2\mid \sigma_{ij}=1\right\}$ and
$S_2^2=\left\{\sigma_{ij}\in S-S_2\mid \sigma_{ij}\ne1\right\}$.
By rearranging $S$ to $S_2\cup(S-S_2)$, we have
$$
K\ \sim_C\  K'=
\left[\begin{BMAT}{ccc.ccc}{ccc.ccc}
&& &&& \\
&K'& &&0& \\
&& &&& \\
&& &&& \\
&*& &&*& \\
&& &&&
\end{BMAT}\right].
$$
The effect of part (iii) on $K$ and hence on $K'$ is doing some row operations
by using the $(i,j)$-rows in the last block of $K'$ for which $\sigma_{ij}=w(\sigma_{pq})\ne1$ modulo $\pi_3$.
By rearranging $S$ further to $S_2\cup S_2^1\cup S_2^2$, we have
$$
K'\ \sim_R\ K''=\left[\begin{BMAT}{ccc.ccc.c}{ccc.ccc.c}
&& &&& &\\
&D_2& &&0& &0\\
&& &&& &\\
&& &&& &\\
&*& &&*& &0\\
&& &&& &\\
&*& &&*& &0
\end{BMAT}\right].
$$
The middle block column is determined by that fact that if $\sigma_{ij}\equiv w\equiv1$, then
$M_{p,q}^{i,j}=0$ for which $\sigma_{pq}\in S_2$.

Consequently, the second block $D_2$ of $D_*$ is a block submatrix of $K''$ which is obtained by removing
the rows and columns associated to $S-S_2$.
Remark also that $K, K'$ and $K''$ have the same eigenvalues which contain the eigenvalues
of $D_2$.
This observation shows that
$$
\sp(D_1)=\max\left\{|\mu_i|\right\}\ge \max\left\{\sqrt{\mu_i\mu_j}\right\}=\sp(K)^{1/2}\ge\sp(D_2)^{1/2}.
$$

For the next inductive step, we recall that every element of $S_3(\subset T_3)$ is of the form $[\tau_{1\ell},\sigma_{ij}]$,
where $i<j$. Taking $\phi$, we have that
\begin{align*}
\phi([\tau_{1\ell},\sigma_{ij}])&=[\prod_r\tau_{1r}^{d_{r,\ell}},
\prod_{1\le p<q\le k_1}\sigma_{pq}^{d_{p,i}d_{q,j}-d_{p,j}d_{q,i}}]\\
&=\prod_r\prod_{1\le p<q\le k_1}[\tau_{1r},\sigma_{pq}]^{{d_{r,\ell}}(d_{p,i}d_{q,j}-d_{p,j}d_{q,i})}
\ \text{ modulo $\pi_4$}.
\end{align*}
This expression is unique except possibly the exponents of the elements $[\tau_{1r},\sigma_{pq}]=1$ modulo $\pi_4$.
This produces the matrix $K=D_1\bigotimes \bigwedge^2D_1$. First if $[\tau_{1r},\sigma_{pq}] =w(S_3)\ne1$ modulo $\pi_4$, by doing some column operations and then by doing some row operations we obtain a matrix $K''$, which can be regarded as a lower triangular block matrix. Finally, we remove the columns and rows from $K''$ which are associated with the elements $[\tau_{1r},\sigma_{pq}] =w(S_3)$ modulo $\pi_4$. This gives rise to the third block $D_3$ of $D_*$.
Hence $\sp(D_3)\le \sp(D_1)^3$.
Continuing in this way, we may assume that the $j$th block $D_j$ of $D_*$ is obtained from $\left(\bigotimes_{j-2}D_1\right)\bigotimes\bigwedge^2D_1$ so that
$$
\sp(D_1)\ge \sp(D_j)^{1/j}.
$$
Consequently, $\GR(\phi)=\max\left\{\sp(D_j)^{1/j}\right\}=\sp(D_1)=\GR(\phi_{\ab})\le \sp(D_*)$.
\end{proof}

\section{Lattices of $\Sol$}\label{Sol-lattice}

A group is said to have {max} if every its subgroup is finitely generated.
It is known that a solvable group has max if and only if it is polycyclic.
A polycyclic group is virtually poly-$\bbz$.
Following the proof of \cite[Lemma~3.1]{LL-JGP} again and using \cite[Lemma~4.4]{Raghunathan},
we can show that a polycyclic group has a torsion-free, fully invariant, finite index subgroup.
Consequently, for the computation of growth rates of endomorphisms on polycyclic groups,
we may assume that polycyclic groups are torsion-free, see Theorem~\ref{f-index}.

The simplest non-nilpotent poly-$\bbz$ group is the Klein bottle group.
We can compute easily the growth rate of any endomorphism on the Klein bottle group.

\begin{Example}\label{KB}
Consider the Klein bottle group $\pi=\langle x,y\mid yxy^{-1}=x^{-1}\rangle$
and let $\phi$ be any endomorphism on $\pi$.
Since $[\pi,\pi]=\langle x^2\rangle$, we have the induced endomorphisms $\phi'$ on $[\pi,\pi]$
and $\phi_\ab$ on $\pi_\ab=\langle \bar{x},\bar{y}\mid \bar{x}^2=[\bar{x},\bar{y}]=1\rangle$.
Recall for example from \cite[Lemma~2.1]{KLY} that $\phi$ satisfies $\phi(x)=x^q$ and $\phi(y)=y^rx^\ell$
for some integers $r,\ell$ and $q$ where either $r$ is odd, or $r$ is even and $q=0$.
Observe that $\phi'$ is the multiplication by $q$ on $[\pi,\pi]$ and so $\GR(\phi')=|q|$.
Since $\phi(y^2)=(y^rx^\ell)^2=y^{2r}x^{((-1)^r+1)\ell}$, it follows that $\phi_\ab$ is the multiplication by $r$
on the subgroup $\langle \bar{y}^2\rangle$ of $\pi_\ab$ and so $\GR(\phi_\ab)=|r|$.
Hence $\GR(\phi)\le\max\left\{|q|,|r|\right\}$. Now, to compute $\GR(\phi)$
we simply notice that the subgroups $\langle x^k\rangle$ of $\pi$ are \emph{undistorted}.
Consequently, we see that
$$
\GR(\phi)=\max\left\{|q|,|r|\right\}=\max\left\{\GR(\phi'),\GR(\phi_\ab)\right\}.
$$

Or, we note that $\pi=\langle x\rangle\rtimes\langle y\rangle$ is preserved by $\phi$,
inducing endomorphisms $\phi'$ and $\hat\phi$ so that $\phi'$ is the multiplication by $q$
and $\hat\phi$ is the multiplication by $r$ both on $\bbz$.
Since $\langle x\rangle$ is undistorted, we can conclude that
$\GR(\phi)=\max\left\{\GR(\phi'),\GR(\phi_\ab)\right\}$ as above.

{On the other hand, we remark also that the Klein group $\pi$ is a Bieberbach group.
Namely, the subgroup $\Gamma$ of $\pi$ generated by $x$ and $y^2$ is isomorphic to $\bbz^2$
which is of index $2$. By observation above, $\Gamma$ is $\phi$-invariant
and hence $\GR(\phi)=\GR(\phi|_\Gamma)=\sp(\phi|_\Gamma)$. Since $\phi|_\Gamma$ is represented by
a matrix with eigenvalues $q$ and $r$, we have $\GR(\phi)=\max\left\{|q|,|r|\right\}$ as it was observed above.}
\end{Example}

\medskip

The primary goal of this section is to compute the growth rate
of any endomorphism on a lattice of the $3$-dimensional solvable Lie group $\Sol$.
{To the best of our knowledge, such a computation on a poly-$\bbz$ group
was carried out for the first time in \cite[Theorem~5.1]{FFK}.
However, we will observe that its statement is false (Remark~\ref{counterexample}).}

The Lie group $\Sol$ is defined by $\Sol:=\bbr^2\rtimes_\varphi\bbr$ where
$$
\varphi(t)=\left[\begin{matrix}e^t&0\\0&e^{-t}\end{matrix}\right].
$$
Then $\Sol$ is a connected and simply connected unimodular $2$-step solvable Lie group of type $\R$.
It has a faithful representation into $\aff(\bbr^3)$ as follows:
$$
\Sol=\left\{\left[\begin{matrix}e^t&0&0&x\\0&e^{-t}&0&y\\0&0&1&t\\0&0&0&1\end{matrix}\right]\Big|\ x,y,t\in\bbr\right\}.
$$
The lattices $\Gamma$ of $\Sol$ are determined by $2\x2$-integer matrices $A$
$$
A=\left[\begin{matrix}\ell_{11}&\ell_{12}\\\ell_{21}&\ell_{22}\end{matrix}\right]
$$
of determinant $1$ and trace $>2$, see for example \cite[Lemma~2.1]{LZ-agt} or \cite{HL13}. Namely,
\begin{align}\label{S}
\Gamma=\GammaA=\langle a_1,a_2,\tau \mid[a_1,a_2]=1, \tau a_i\tau^{-1}=a_1^{\ell_{1i}}a_2^{\ell_{2i}}\rangle=\bbz^2\rtimes_A\bbz.\tag{S}
\end{align}

Let $\theta:\Gamma\to\Gamma$ be any endomorphism. By \cite[Theorem~2.4]{LZ-agt}, $\theta$ is one of the following:

\noindent
{\bf Type (I)} \hspace{.82cm}${\displaystyle \theta(a_1)=a_1^{m_{11}}a_2^{m_{21}},\
\theta(a_2)=a_1^{m_{21}}a_2^{m_{22}},\ \theta(\tau)=a_1^pa_2^q\tau}$

\noindent
{\bf Type (II)} \hspace{.66cm}${\displaystyle \theta(a_1)=a_1^{m_{11}}a_2^{m_{21}},\
\theta(a_2)=a_1^{m_{21}}a_2^{m_{22}},\ \theta(\tau)=a_1^pa_2^q\tau^{-1}}$

\noindent
{\bf Type (III)} \hspace{.5cm}${\displaystyle \theta(a_1)=1,\ \theta(a_2)=1,\
\theta(\tau)=a_1^pa_2^q\tau^m}$ \text{ with $m\ne\pm1$}

\noindent
Noting that $[a_1,a_2]=1$, we shall denote $\bfa^\bfx$ for $a_1^{x_1}a_2^{x_2}$.
Then $\tau a_i\tau^{-1}=\bfa^{A\bfe_i}$ and $\tau\bfa^\bfx\tau^{-1}=\bfa^{A\bfx}$.
Since $\Sol$ is of type $\R$, $\Sol$ satisfies the rigidity of lattices, see for example \cite{LL-nagoya}:
every endomorphism $\theta:\Gamma\to\Gamma$ extends uniquely as a Lie group endomorphism $D:\Sol\to\Sol$.
Let $S=\left\{a_1,a_2,\tau\right\}$, a set of generators for $\Gamma$.
With respect to the linear basis $\log{S}=\left\{\log a_1,\log a_2,\log\tau\right\}$ of the Lie algebra of $\Sol$,
the differential $D_*$ of $D$ can be expressed as a matrix of the form
$$
\left[\begin{matrix}m_{11}&m_{12}&0\\m_{21}&m_{22}&0\\{*}&*&\pm1\end{matrix}\right]\quad\text{or}\quad
\left[\begin{matrix}0&0&0\\0&0&0\\{*}&*&m\end{matrix}\right]
$$
according as $\theta$ is of type (I), (II) or (III). Let $H=\langle a_1,a_2\rangle=\bbz^2$,
the subgroup of $\Gamma$ generated by $a_1$ and $a_2$. Then $H$ is a fully invariant subgroup of $\Gamma$.

For any matrix $M$, $\bb M\bb_i$ denotes the sum of the absolute values of the entries of the $i$th column of $M$,
and the subscript is suppressed when $M$ is a column matrix. Let $S=\left\{a_1,a_2,\tau\right\}$ and $S'=\left\{a_1,a_2\right\}$.
Since $\tau^n\bfa^\bfx\tau^{-n}=\bfa^{A^n\bfx}$ for all $n>0$, we have
\begin{align*}
&L(\bfa^{A^n\bfx},S)=L(\tau^n\bfa^\bfx\tau^{-n},S)=2n+\bb\bfx\bb,\\
&L(\bfa^{A^n\bfx},S')=\bb A^n\bfx\bb_1+\bb A^n\bfx\bb_2.
\end{align*}
Now, we recall from \cite[Sect.~3]{HL13} or \cite[Theorem~2.4]{LZ-agt} that $A$ is conjugate to a diagonal matrix
$$
\bba=\left[\begin{matrix}e^{t_0}&0\\0&e^{-t_0}\end{matrix}\right]
:=\left[\begin{matrix}\alpha&0\\0&\beta\end{matrix}\right],
$$
say $P^{-1}AP=\bba$ for some invertible matrix $P$. Hence $A^n=P\bba^n P^{-1}$ and this shows that
$$
A^n=\left[\begin{matrix}a_{11}\alpha^n+b_{11}\beta^n&a_{12}\alpha^n+b_{12}\beta^n\\
a_{21}\alpha^n+b_{21}\beta^n&a_{22}\alpha^n+b_{22}\beta^n\end{matrix}\right]
$$
for some nonzero constants $a_{ij}$ which are independent of $n$.
Thus $\bb A^n\bb_i=a_i|\alpha|^n+b_i|\beta|^n$ for some positive constants $a_i$ and $b_i$.
It now follows that
$$
L(\bfa^{A^n\bfx},S')=A_1|\alpha|^n+A_2|\beta|^n
$$
for some positive constants $A_1$ and $A_2$ where $\alpha$ and $\beta$ are the eigenvalues of $A$.
Consequently $H$ is exponentially distorted.

Given $\theta$, we denote $\theta':H\to H$ and $\hat\theta:\Gamma/H\to\Gamma/H$. Then
$$
\GR(\theta)\le\max\left\{\GR(\theta'),\GR(\hat\theta)\right\}=\max\left\{\sp(\theta'),\sp(\hat\theta)\right\}=\sp(D_*).
$$
We now determine $\GR(\theta)$ explicitly:
\smallskip

\noindent
\underline{When $\theta$ is of type (III)}, we have $\theta'$ is a trivial homomorphism
and $\hat\theta$ is multiplication by $m$. Because $\GR(\theta')=0$ and $\GR(\hat\theta)=|m|$,
we have
$$
|m|=\GR(\hat\theta)\le\GR(\theta)\le \max\left\{\GR(\theta'),\GR(\hat\theta)\right\}=|m|.
$$
\smallskip

\noindent
\underline{When $\theta$ is of type (I)}, $\theta'$ is represented by the matrix
$$
M=\left[\begin{matrix}m_{11}&m_{12}\\m_{21}&m_{22}\end{matrix}\right].
$$
That is, $\theta(a_i)=\bfa^{M\bfe_i}$. From this, we can show that $\theta(\bfa^\bfp\tau)=\bfa^{M\bfp+\bfp}\tau$
where $\bfp$ is any column vector $\bfp=(p,q)^t$.
Furthermore, we have
\begin{align*}
&\theta^k(a_i)=\bfa^{M^k\bfe_i}=\bfa^{A^{n}A^{-n}M^k\bfe_i}=\tau^{n}\bfa^{A^{-n}M^k\bfe_i}\tau^{-n},\\
&\theta^k(\tau)=\bfa^{(I+M+M^2+\cdots+M^{k-1})\bfp}\tau\\
&\qquad\ =\bfa^\bfp(\tau^{n_1}\bfa^{A^{-n_1}M\bfp}\tau^{-n_1})
\cdots(\tau^{n_{k-1}}\bfa^{A^{-n_{k-1}}M^{k-1}\bfp}\tau^{-n_{k-1}})\tau.
\end{align*}
These identities imply that
\begin{align*}
&L(\theta^k(a_i),S)=\min_{n\ge0}\left\{\bb A^{-n}M^k\bb_i+2n\right\},\\
&L(\theta^k(\tau),S)=\min_{n_i\ge0}\left\{\bb\bfp\bb+\bb A^{-n_1}M\bfp\bb
+\cdots+\bb A^{-n_{k-1}}M^{k-1}\bfp\bb\right.\\
&\hspace{3.3cm}+2\left(n_1+\cdots+n_{k-1}\right)+1\Big\}\ \text{ when $\bfp\ne\bf0$}.
\end{align*}

Let $\mu$ and $\nu$ be the eigenvalues of $M$. If $\mu=0$ or $\nu=0$ then it follows that $M=0$.
Indeed, using the notation of  \cite[Theorem~2.4]{LZ-agt},
$$
\mu\text{ or }\nu =\tfrac{2(\ell_{11}\ell_{22}-1)u-\left(\ell_{11}\ell_{22}
-\ell_{12}\ell_{21}\pm\ell_{21}\sqrt{(\ell_{11}+\ell_{22})^2-4}\right)v}
{2(\ell_{11}\ell_{22}-1)},
$$
and $\mu$ or $\nu=0$ induces that $v=0$ (because $\sqrt{(\ell_{11}+\ell_{22})^2-4}$ is an irrational number)
and so $\mu=\nu=u=0$ and hence $M=0$.
This case yields $\GR(\theta)=\GR(\hat\theta)=1$. Hence we now {assume $M\ne0$},
or equivalently, $\mu\ne0$ and $\nu\ne0$. Recall from the proof of \cite[Theorem~2.4]{LZ-agt}
that both $M$ and $A$ are simultaneously diagonalizable to diagonal matrices $\diag\left\{\mu,\nu\right\}$
and $\diag\left\{\alpha,\beta\right\}$, respectively. {\bf In what follows, we may assume that $\alpha>1$
and $\beta=\frac{1}{\alpha}$}. Thus $A^{-n}M^k$ is similar to
the diagonal matrix $\bbd=\diag\left\{\frac{\mu^k}{\alpha^n},\frac{\nu^k}{\beta^n}\right\}$,
say $A^{-n}M^k=P\bbd P^{-1}$ for some invertible matrix $P$.
This shows that each entry of $A^{-n}M^k$ is a linear combination of $\frac{\mu^k}{\alpha^n}$
and $\frac{\nu^k}{\beta^n}$ by nonzero constants which are independent of $n$ and $k$
(see the discussion before for $A$ and $\bba$):
\begin{align*}
&A^{-n}M^k=\left[\begin{matrix}a_{11}\frac{\mu^k}{\alpha^n}+b_{11}\frac{\nu^k}{\beta^n}
&a_{12}\frac{\mu^k}{\alpha^n}+b_{12}\frac{\nu^k}{\beta^n}\\
a_{21}\frac{\mu^k}{\alpha^n}+b_{21}\frac{\nu^k}{\beta^n}
&a_{22}\frac{\mu^k}{\alpha^n}+b_{22}\frac{\nu^k}{\beta^n}
\end{matrix}\right],\\
&A^{-n_j}M^j\bfp=\left[\begin{matrix}
(p_1a_{11}+p_2a_{12})\frac{\mu^j}{\alpha^{n_j}}+(p_1b_{11}+p_2b_{12})\frac{\nu^j}{\beta^{n_j}}\\
(p_1a_{21}+p_2a_{22})\frac{\mu^j}{\alpha^{n_j}}+(p_1b_{21}+p_2b_{22})\frac{\nu^j}{\beta^{n_j}}
\end{matrix}\right]\end{align*}
This makes possible to assume that $\bb A^{-n}M^k\bb_i$ and $\bb A^{-n_j}M^j\bfp\bb$
are of the form $A_1\frac{|\mu|^k}{\alpha^{n}}+A_2\frac{|\nu|^k}{\beta^{n}}$
and $B_1\frac{|\mu|^j}{\alpha^{n_j}}+B_2\frac{|\nu|^j}{\beta^{n_j}}$ respectively.

According to Lemma~\ref{tech1}, we may assume that $\bb A^{-n}M^k\bb_i$ and $\bb A^{-n_j}M^j\bfp\bb$
are simply $\frac{|\mu|^k}{\alpha^{n}}+\frac{|\nu|^k}{\beta^{n}}$
and $\frac{|\mu|^j}{\alpha^{n_j}}+\frac{|\nu|^j}{\beta^{n_j}}$, respectively.
Let
\begin{align}
\label{min1}
L(\theta^k(a_i),S)=\bb A^{-n_k}M^k\bb_i+2n_k\tag{M1}
\end{align}
for some $n_k$. Then
\begin{align*}
\lim_{k\to\infty}L(\theta^k(a_i),S)^{1/k}&=\lim_{k\to\infty}\left(\frac{|\mu|^k}{\alpha^{n_k}}+\frac{|\nu|^k}{\beta^{n_k}}\right)^{1/k}\notag\\
&=\max\left\{\lim_{k\to\infty}\left(\frac{|\mu|^k}{\alpha^{n_k}}\right)^{1/k},\
\lim_{k\to\infty}\left(\frac{|\nu|^k}{\beta^{n_k}}\right)^{1/k}\right\}.
\end{align*}
Let
\begin{align}
\label{min2}
L(\theta^k(\tau),S)=&\ \bb\bfp\bb+\bb A^{-n_1}M\bfp\bb+\cdots+\bb A^{-n_{k-1}}M^{k-1}\bfp\bb\tag{M2}\\
&\ +2\left(n_1+\cdots+n_{k-1}\right)+1\notag
\end{align}
for some $n_1,\cdots,n_{k-1}$. Then $\bb A^{-n_j}M^j\bfp\bb=\min_{n\ge0}\left\{\bb A^{-n}M^j\bfp\bb\right\}$ and
\begin{align*}
\lim_{k\to\infty}L(\theta^k(\tau),S)^{1/k}
&=\lim_{k\to\infty}\left(\bb A^{-n_1}M\bfp\bb+\cdots+\bb A^{-n_{k-1}}M^{k-1}\bfp\bb\right)^{1/k}\notag\\
&=\lim_{k\to\infty}\left(\sum_{j=1}^{k-1}\left(\frac{|\mu|^{j}}{\alpha^{n_{j}}}+\frac{|\nu|^{j}}{\beta^{n_{j}}}\right)\right)^{1/k}.
\end{align*}
Consequently, when $M\ne0$ we have
\begin{align}\label{G}
\GR(\theta)&=\max\left\{\lim_{k\to\infty}\left(\frac{|\mu|^k}{\alpha^{n_k}}\right)^{1/k},\
\lim_{k\to\infty}\left(\frac{|\nu|^k}{\beta^{n_k}}\right)^{1/k},\right.\tag{G}\\
&\hspace{3cm}\left.
\lim_{k\to\infty}\left(\sum_{j=1}^{k-1}\left(\frac{|\mu|^{j}}{\alpha^{n_{j}}}+\frac{|\nu|^{j}}{\beta^{n_{j}}}\right)\right)^{1/k}\right\}.\notag
\end{align}
Here, we used Lemma~\ref{max-lim} for the identity.

\begin{Example}\label{sol-ex1}
When $\theta$ is of type (I) with $M=A$, the formula \eqref{G} is already enough to compute $\GR(\theta)$. In fact,
\begin{align*}
&\theta^k(a_i)=\bfa^{M^k\bfe_i}=\tau^ka_i^{A^{-k}M^k\bfe_i}\tau^{-k}=\tau^ka_i\tau^{-k},\\
&\theta^k(\tau)=\bfa^{(I+M+M^2+\cdots+M^{k-1})\bfp}\tau\\
&\hspace{0.94cm}=\bfa^\bfp(\tau\bfa^{A^{-1}M\bfp}\tau^{-1})\cdots(\tau^{k-1}\bfa^{A^{-(k-1)}M^{k-1}\bfp}\tau^{-(k-1)})\tau\\
&\hspace{0.94cm}=\bfa^\bfp(\tau\bfa^{\bfp}\tau^{-1})
\cdots(\tau^{k-1}\bfa^\bfp\tau^{-(k-1)})\tau.
\end{align*}
This shows that $n_j=j$ for all $j\ge1$ in \eqref{min1} and \eqref{min2}. Because $\mu=\alpha$ and $\nu=\beta$,
$$
\lim_{k\to\infty}\left(\frac{|\mu|^j}{\alpha^{n_j}}\right)^{1/k}=
\lim_{k\to\infty}\left(\frac{|\nu|^j}{\beta^{n_j}}\right)^{1/k}=1
$$
and hence $\GR(\theta)=1$ whereas $\GR(\theta')=\sp(A)=\alpha>1$.
This provides a counter-example to \cite[Theorem~5.1]{FFK}.
\end{Example}

In order to use the formula \eqref{G} in general, we first have to determine $n_j$ $(j=1,\cdots,k$)
in \eqref{min1} and \eqref{min2}. For this purpose, we observe that
\begin{align}
\frac{|\mu|^j}{\alpha^{n_j}}+\frac{|\nu|^j}{\beta^{n_j}}
\le\frac{|\mu|^j}{\alpha^{n}}+\frac{|\nu|^j}{\beta^{n}}\
&\Longrightarrow\ \left|\frac{\mu}{\nu}\right|^j\frac{1}{\alpha^{n_j}}+\alpha^{n_j}
\le \left|\frac{\mu}{\nu}\right|^j\frac{1}{\alpha^{n}}+\alpha^{n}\notag\\
\label{I}
&\Longrightarrow\ \left|\frac{\mu}{\nu}\right|^j\frac{\alpha^n-\alpha^{n_j}}{\alpha^{n_j+n}}
\le \alpha^{n}-\alpha^{n_j}.\tag{$\dagger$}
\end{align}

First we consider the case where $|\mu|\le|\nu|$. In this case, we notice that all $n_j=0$.
If some $n_j>0$, then $0\le n<n_j$ for some $n$ and then the above inequality \eqref{I} reduces to
$$
\left|\frac{\mu}{\nu}\right|^j\ge \alpha^{n_j+n}>\alpha^{2n_j-1}>1,
$$
which contradicts $|\mu|\le|\nu|$. By \eqref{G}, we have
\begin{align*}
\GR(\theta)=\max\left\{|\mu|,|\nu|,\lim_{k\to\infty}\left(\sum_{j=1}^{k-1}\left(|\mu|^j+|\nu|^j\right)\right)^{1/k}\right\}.
\end{align*}
Since $\lim_{k\to\infty}\left(\sum_{j=1}^{k-1}|\mu|^j\right)^{1/k}=|\mu|$ and $\lim_{k\to\infty}\left(\sum_{j=1}^{k-1}|\nu|^j\right)^{1/k}=|\nu|$, Lemma~\ref{tech1} implies that $\GR(\theta)=|\nu|$.

Next we consider the case where $|\nu|<|\mu|$. Since $|\frac{\mu}{\nu}|^j>1>\alpha^{-1}$, we have
$$
\left|\frac{\mu}{\nu}\right|^j\ge\alpha^{2n_j-1}
$$
even for $n_j=0$. On the other hand, for $n>n_j$, the above inequality \eqref{I} reduces to
$$
\left|\frac{\mu}{\nu}\right|^j\le\alpha^{n_j}\cdot\min_{n>n_j}\left\{\alpha^n\right\}=\alpha^{n_j}\cdot\alpha^{n_j+1}
=\alpha^{2n_j+1}.
$$
In all, we obtain that
\begin{align}\label{ineq}
\alpha^{2n_j-1}\le\left|\frac{\mu}{\nu}\right|^j\le\alpha^{2n_j+1},\quad 1\le j\le k.\tag{$\ast$}
\end{align}
Taking $\log$ on \eqref{ineq},
we obtain
\begin{align*}
\frac{\left(\frac{\log|\mu|-\log|\nu|}{\log\alpha}\right)j-1}{2}\le n_j\le
\frac{\left(\frac{\log|\mu|-\log|\nu|}{\log\alpha}\right)j+1}{2}.
\end{align*}
Write $\chi=\frac{\log|\mu|-\log|\nu|}{\log\alpha}$.
First we compute
\begin{align*}
&\frac{|\mu|}{\alpha^{{(k\chi+1)}/{2k}}}
\le\left(\frac{|\mu|^k}{\alpha^{n_k}}\right)^{1/k}
\le\frac{|\mu|}{\alpha^{{(k\chi-1)}/{2k}}},\quad
\lim_{k\to\infty}\left(\frac{|\mu|^k}{\alpha^{n_k}}\right)^{1/k}
=\frac{|\mu|}{\alpha^{{\chi}/{2}}}.
\end{align*}
Similarly, we obtain that
$$
\lim_{k\to\infty}\left(\frac{|\nu|^k}{\beta^{n_k}}\right)^{1/k}
=\frac{|\nu|}{\beta^{{\chi}/{2}}}=\alpha^{{\chi}/{2}}|\nu|.
$$
Next, we compute
\begin{align*}
\alpha^{-1/2}\sum_{j=1}^{k-1}\left(\frac{|\mu|}{\alpha^{\chi/2}}\right)^j
\le\sum_{j=1}^{k-1}\frac{|\mu|^j}{\alpha^{n_j}}\le \alpha^{1/2}\sum_{j=1}^{k-1}
\left(\frac{|\mu|}{\alpha^{\chi/2}}\right)^j.
\end{align*}
Thus a simple computation shows that
\begin{align*}
&\lim_{k\to\infty}\left(\sum_{j=1}^{k-1}\frac{|\mu|^j}{\alpha^{n_j}}\right)^{1/k}=\frac{|\mu|}{\alpha^{\chi/2}},\quad
\lim_{k\to\infty}\left(\sum_{j=1}^{k-1}\frac{|\nu|^j}{\beta^{n_j}}\right)^{1/k}=\frac{|\nu|}{\beta^{\chi/2}}=\alpha^{\chi/2}|\nu|.
\end{align*}
In all, we have that if $|\nu|<|\mu|$ then
$$
\GR(\theta)=\max\left\{\frac{|\mu|}{\alpha^{\chi/2}},\
\frac{|\nu|}{\beta^{\chi/2}}\right\}=\max\left\{|\mu|\sqrt{\left|\frac{\nu}{\mu}\right|},
|\nu|\sqrt{\left|\frac{\mu}{\nu}\right|}\right\}
=\sqrt{\left|{\mu}{\nu}\right|}=\sqrt{|\det(M)|}.
$$
Here the first identity follows from Lemma~\ref{tech1} and the second identity follows from
\begin{align*}
&\alpha^{\chi/2}=\left(e^{\chi/2}\right)^{\log\alpha}=e^{\left(\log{\left|\frac{\mu}{\nu}\right|}\right)/2}=\sqrt{\left|\frac{\mu}{\nu}\right|}.
\end{align*}

In summary for endomorphisms of type (I),
\begin{Thm}\label{Sol-I}
Let $\theta$ be an endomorphism of type $(\mathrm{I})$. Then
$$
\GR(\theta)=\begin{cases}
1&\text{when $\mu=\nu=0$}\\
|\nu|&\text{when $0<|\mu|\le|\nu|$}\\
\sqrt{|\mu\nu|}&\text{when $0<|\nu|<|\mu|$.}
\end{cases}
$$
In particular, $\GR(\theta)$ is an algebraic integer.
\end{Thm}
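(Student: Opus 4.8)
### Proof Proposal

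The plan is to extract the classification of $\GR(\theta)$ directly from the asymptotic formula \eqref{G}, which already packages all the work. The three-case split in the statement mirrors exactly the case analysis that precedes it: $M=0$ (equivalently $\mu=\nu=0$), $0<|\mu|\le|\nu|$, and $0<|\nu|<|\mu|$. So the "proof" is really an act of bookkeeping: collect the three computations carried out above into a single statement, and then address the one genuinely new assertion, namely that in each case $\GR(\theta)$ is an algebraic integer.

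First I would recall that by \cite[Theorem~2.4]{LZ-agt} the matrix $M$ (representing $\theta'$ on $H=\bbz^2$) and the defining matrix $A$ of the lattice are simultaneously diagonalizable, and that the eigenvalues $\mu,\nu$ of $M$ are either both zero (in which case $M=0$) or both nonzero. When $M=0$, the subgroup $H$ is sent to $1$, so $\theta'$ is trivial, $\hat\theta$ is the identity on $\bbz$, and $\GR(\theta)=\max\{\GR(\theta'),\GR(\hat\theta)\}=1$; this uses the outer-bound $\GR(\theta)\le\sp(D_*)$ together with $\GR(\hat\theta)\le\GR(\theta)$ from the Preliminaries. When $0<|\mu|\le|\nu|$, the computation above forces all the minimizing exponents $n_j$ to be $0$, so \eqref{G} collapses to $\max\{|\mu|,|\nu|,\lim_k(\sum_{j=1}^{k-1}(|\mu|^j+|\nu|^j))^{1/k}\}$, and Lemma~\ref{tech1} (applied to $f(k)=\sum|\mu|^j$, $g(k)=\sum|\nu|^j$, whose $k$th roots tend to $|\mu|$ and $|\nu|$ respectively) yields $\GR(\theta)=|\nu|$. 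When $0<|\nu|<|\mu|$, the inequalities \eqref{ineq} pin down $n_j$ up to an additive constant as $n_j=\tfrac12\chi j+O(1)$ with $\chi=(\log|\mu|-\log|\nu|)/\log\alpha$, and substituting into \eqref{G}, together with $\alpha^{\chi/2}=\sqrt{|\mu/\nu|}$ and one more application of Lemma~\ref{tech1}, gives $\GR(\theta)=\max\{|\mu|/\alpha^{\chi/2},\,\alpha^{\chi/2}|\nu|\}=\sqrt{|\mu\nu|}=\sqrt{|\det M|}$.

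For the algebraicity claim I would argue as follows. In the first case $\GR(\theta)=1$ is trivially an algebraic integer. In the remaining cases, $\mu$ and $\nu$ are the roots of the characteristic polynomial of the integer matrix $M$, hence algebraic integers, so $|\nu|$ and $|\mu\nu|=|\det M|\in\bbz$ are as well; and $\sqrt{|\det M|}$ is an algebraic integer since it is a root of $x^2-|\det M|\in\bbz[x]$. (If one prefers to see it more concretely: when the eigenvalues are real, $\mu\nu=\det M$ is an integer, so $\sqrt{|\det M|}$ is manifestly an algebraic integer; when they are a complex-conjugate pair, $|\nu|^2=|\mu\nu|=\det M\ge 0$ is again an integer. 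In both subcases of case three we get $\sqrt{|\det M|}$.) Thus in every case $\GR(\theta)$ is an algebraic integer.

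The main obstacle is not any of the three limit computations themselves — those are driven entirely by Lemma~\ref{tech1} and the estimate \eqref{ineq} already in hand — but rather making sure the degenerate situations are handled cleanly: specifically verifying that $\mu=0$ or $\nu=0$ really forces $M=0$ (this is the irrationality-of-$\sqrt{(\ell_{11}+\ell_{22})^2-4}$ argument, which must be invoked to rule out a rank-one $M$), and confirming that the reductions "assume $\bb A^{-n}M^k\bb_i$ is simply $|\mu|^k/\alpha^n+|\nu|^k/\beta^n$" via Lemma~\ref{tech1} are legitimate, i.e. that the suppressed positive constants genuinely do not affect the exponential rate. Once those two points are granted, the theorem is just the assembly of the preceding paragraphs plus the one-line algebraicity observation.
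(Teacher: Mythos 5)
Your proposal is correct and follows essentially the same route as the paper: Theorem~\ref{Sol-I} is indeed just the assembly of the three case computations that precede it, driven by formula \eqref{G}, the bound \eqref{ineq}, and repeated use of Lemma~\ref{tech1}, and your reconstruction of each case matches the paper's. The only genuinely new content you supply is the explicit justification of the "algebraic integer" clause (which the paper asserts without proof), and that argument is sound; one small simplification available here is that since $M$ and $A$ are simultaneously diagonalizable and $A$ has real eigenvectors, $\mu$ and $\nu$ are automatically real, so the complex-conjugate subcase you mention cannot actually occur for type (I) endomorphisms.
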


In the following examples,
we will carry on explicit computation of growth rate to confirm our Theorem~\ref{Sol-I}.

\begin{Example}\label{sol-ex2}
Let us consider an example:
$$
A=\left[\begin{matrix}
2&1\\1&1\end{matrix}\right],\
M=\left[\begin{matrix}
1&\hspace{8pt}2\\2&-1\end{matrix}\right],\ \theta(\tau)=\tau.
$$
Then:
\begin{itemize}
\item $\alpha=\frac{3+\sqrt{5}}{2}, \beta=\frac{3-\sqrt{5}}{2},\mu=\sqrt{5}$ and $\nu=-\sqrt{5}$;
hence $|\mu|\le|\nu|$.
\item $\tau a_1\tau^{-1}=a_1^{2}a_2$ and $\tau a_2\tau^{-1}=a_1a_2$.
\item $\theta^{2k}(a_1)=a_1^{5^k}(=\tau a_1^{5^k}a_2^{-5^k}\tau^{-1}=\tau^{-1}a_1^{2\cdot5^k}a_2^{5^k}\tau),\
\theta^{2k}(a_2)=a_2^{5^k}$, $\theta^k(\tau)=\tau$.
\end{itemize}
Hence $L_{2k}(\theta,S)=5^k$ and so $\GR(\theta)=\sqrt{5}=|\nu|$.
\end{Example}

\begin{Example}\label{sol-ex3}
Let us consider another example:
$$
A=\left[\begin{matrix}
1&1\\2&3\end{matrix}\right],\
M=\left[\begin{matrix}
0&1\\2&2\end{matrix}\right],\ \theta(\tau)=\tau.
$$
Then:
\begin{itemize}
\item $\alpha=2+\sqrt{3}, \beta=2-\sqrt{3},\mu=1+\sqrt{3}$ and $\nu=1-\sqrt{3}$; $|\nu|<|\mu|$.
\item $\tau a_1\tau^{-1}=a_1a_2^2$ and $\tau a_2\tau^{-1}=a_1a_2^3$.
\item By induction on $k$, we can show that
\begin{align*}
&\theta^{2k+1}(a_2)=\tau^{k+1}a_1^{2^k}\tau^{-(k+1)},\\
&\theta^{2k}(a_2)=\tau^{k}a_2^{2^k}\tau^{-k}(=\tau^{k-1}a_1^{2^k}a_2^{3\cdot2^k}\tau^{-(k-1)}=\tau^{k+1}a_1^{-2^{k}}a_2^{2^k}\tau^{-(k+1)}),\\
&\theta^{2k+1}(a_1)=\tau^{k}a_2^{2^{k+1}}\tau^{-k},\\
&\theta^{2k}(a_1)=\tau^{k}a_1^{2^k}\tau^{-k},\\
&\theta^k(\tau)=\tau.
\end{align*}
\end{itemize}
The observation shows that  $L_{2k}(\theta,S)=2^k+2k$ and so
\begin{align*}
\GR(\theta)&=\lim_{k\to\infty}L_k(\theta,S)^{1/k}=\lim_{k\to\infty}L_{2k}(\theta,S)^{1/2k}
=\lim_{k\to\infty}(2^k+2k)^{1/2k}=\sqrt{2}.
\end{align*}
On the other hand, because $\det(M)=-2$, Theorem~\ref{Sol-I} also says that $\GR(\theta)=\sqrt{2}$.
\end{Example}
\smallskip

\noindent
\underline{When $\theta$ is of type (II)}, using the notation as in type (I),
we have $\theta^2(a_i)=\bfa^{M^2\bfe_i}$ and $\theta^2(\tau)=\bfa^{(M-A)\bfp}\tau$.
This says that $\theta^2$ is of type (I) with $M^2A=AM^2$. Let $\mu$ and $\nu$ be the eigenvalues of $M$
so that $\frac{\mu^2}{\alpha}$ and $\frac{\nu^2}{\beta}$ are the eigenvalues of $A^{-1}M^2$.
By Theorem~\ref{Sol-I}, we have
$$
\GR(\theta^2)=\begin{cases}
1&\text{when $\mu=\nu=0$}\\
|\nu|^2&\text{when $0<|\mu|\le|\nu|$}\\
|\mu\nu|&\text{when $0<|\nu|<|\mu|$.}
\end{cases}
$$
Consequently,
$$
\GR(\theta)=\GR(\theta^2)^{1/2}=\begin{cases}
1&\text{when $\mu=\nu=0$}\\
|\nu|&\text{when $0<|\mu|\le|\nu|$}\\
\sqrt{|\mu\nu|}&\text{when $0<|\nu|<|\mu|$.}
\end{cases}
$$
\bigskip

Now we can summarize what we have found as follows:
\begin{Thm}\label{Sol}
Let $\theta$ be an endomorphism on the group $\GammaA$ given as in \eqref{S}.
Let $\theta'=M$ and $\hat\theta=m$ be the endomorphisms induced by $\theta$
on the subgroup $H=\bbz^2$ and the quotient group $\pi/H=\bbz$, respectively.
Let $\alpha,\beta$ and $\mu,\nu$ are the respective eigenvalues of $A$ and $M$
so that $\frac{\mu}{\alpha},\frac{\nu}{\beta}$ are the eigenvalues of $A^{-1}M$ and $\alpha>1$.
Then
$$
\GR(\theta)=\begin{cases}
|m|=\GR(\hat\theta)&\text{when $\mu=\nu=0$}\\
|\nu|=\GR(\theta')&\text{when $0<|\mu|\le|\nu|$}\\
\sqrt{|\mu\nu|}&\text{when $0<|\nu|<|\mu|$.}
\end{cases}
$$
{In particular, $\GR(\theta)$ is an algebraic integer and $\GR(\theta)\le\sp(D_*)$.}
\end{Thm}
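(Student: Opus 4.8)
The plan is to assemble the theorem from the three-case analysis already carried out for each type of endomorphism $\theta:\GammaA\to\GammaA$, together with the general upper bound $\GR(\theta)\le\max\left\{\GR(\theta'),\GR(\hat\theta)\right\}=\sp(D_*)$ recorded at the start of this section. Recall that by \cite[Theorem~2.4]{LZ-agt} every such $\theta$ is of type (I), (II) or (III), and that the block form of $D_*$ gives $\sp(D_*)=\max\left\{\sp(M),1\right\}$ in types (I), (II) and $\sp(D_*)=|m|$ in type (III). First, for type (III) one has $M=0$, so $\mu=\nu=0$, and the discussion of type (III) above gives $\GR(\theta)=\GR(\hat\theta)=|m|$, which is the first line. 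For type (I) the statement is exactly Theorem~\ref{Sol-I}; here one observes that $M=0$ forces $\mu=\nu=0$ and that $\hat\theta$ is then multiplication by $\pm1$, so $|m|=1=\GR(\theta)$, matching the first line, while the other two lines coincide verbatim.

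For type (II) I would use that $\theta^2(a_i)=\bfa^{M^2\bfe_i}$ and $\theta^2(\tau)=\bfa^{(M-A)\bfp}\tau$ with $M^2A=AM^2$, so $\theta^2$ is of type (I) with associated matrix $M^2$, whose eigenvalues are $\mu^2,\nu^2$ and for which $\tfrac{\mu^2}{\alpha},\tfrac{\nu^2}{\beta}$ are the eigenvalues of $A^{-1}M^2$. Applying Theorem~\ref{Sol-I} to $\theta^2$ and using $\GR(\theta)=\GR(\theta^2)^{1/2}$ produces precisely the three lines above (the inequalities $0<|\mu|\le|\nu|$ and $0<|\nu|<|\mu|$ being preserved under squaring). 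In every case one then identifies $\GR(\theta')=\sp(M)=\max\left\{|\mu|,|\nu|\right\}$ and $\GR(\hat\theta)=|m|$ to justify labelling the first row $\GR(\hat\theta)$ and the second row $\GR(\theta')$.

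For the ``in particular'' clause: from the proof of \cite[Theorem~2.4]{LZ-agt}, $M$ and $A$ are simultaneously diagonalizable to \emph{real} diagonal matrices, so $\mu,\nu\in\bbr$; being roots of the monic integer characteristic polynomial of $M$ they are algebraic integers, hence so are $|\mu|,|\nu|$ and $\sqrt{|\mu\nu|}=\sqrt{|\det M|}$ (a root of the monic polynomial $x^2-|\det M|$), while $|m|\in\bbz$. Thus $\GR(\theta)$ is an algebraic integer in each case. The inequality $\GR(\theta)\le\sp(D_*)$ is immediate from the upper bound recalled above, since the diagonal blocks of $D_*$ are $M$ (or $0$) and $\pm1$ (or $m$).

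There is essentially no deep obstacle left, since the analytic content lives in Theorem~\ref{Sol-I} and in the distortion computation for $H$ preceding it. The only step demanding care is the type (II) bookkeeping: verifying that $\theta^2$ really has the shape required of a type (I) endomorphism (in particular that $\theta^2(\tau)=\bfa^{(M-A)\bfp}\tau$ is of the form $\bfa^{M^2\bfp'+\bfp'}\tau$ is \emph{not} needed---what is needed is only that $\theta^2(\tau)\in H\tau$, which is clear), and that the degenerate subcase $M=0$ inside types (I) and (II) is consistent with the $\mu=\nu=0$ row, where $\hat\theta=\pm\id$ on $\bbz$ and hence $|m|=1$.
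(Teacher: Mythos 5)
Your proposal is correct and follows essentially the same route as the paper: the theorem is a summary of the three-case analysis in Section~\ref{Sol-lattice}, obtained by treating type~(III) directly, quoting Theorem~\ref{Sol-I} for type~(I), and reducing type~(II) to type~(I) via $\theta^2$ together with $\GR(\theta)=\GR(\theta^2)^{1/2}$; the ``in particular'' clauses then follow from the explicit formulas and the inequality $\GR(\theta)\le\max\{\GR(\theta'),\GR(\hat\theta)\}=\sp(D_*)$ recorded at the start of the section.
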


\begin{Rmk}\label{counterexample}
In Theorem~\ref{Sol}, if $0<|\nu|<|\mu|$ then
$$
\GR(\theta)=\sqrt{|\mu\nu|}<\sqrt{|\mu|^2}=|\mu|=\max\left\{\GR(\theta')=|\mu|,\GR(\hat\theta)=1\right\}.
$$
Thus there are many endomorphisms (of type (I) or type (II)) for which the strict inequality
$\GR(\theta)<\max\left\{\GR(\theta'),\GR(\hat\theta)\right\}$ holds. This provides a counterexample to \cite[Theorem~5.1]{FFK}.
\end{Rmk}

\begin{Rmk}
{Let $f$ be a continuous map on a compact manifold $M$.
Then by \cite[Theorem~8.1.1]{KH} we have that $h_\top(f)\ge h_\alg(f)=\log\GR(\phi)$,
where $\phi$ is a homomorphism induced by $f$ of the group of covering transformations
on the universal cover of $M$.}

When $M$ is an infra-solvmanifold of type $\R$, it is known from \cite[Theorem~5.2, Remark~5.3]{FL} that
$$
h_\top(f)\ge\log\sp(\bigwedge D_*).
$$

{Let $M$ be an infra-nilmanifold or a closed $3$-manifold with $\Sol$-geometry.
By Theorem~\ref{Ko type} and Theorem~\ref{Sol}, we can see that $\sp(D_*)\ge\GR(\phi)=\GR(f)$.}
Consequently, we have that
$$
h_\top(f)\ge\log\sp(\bigwedge D_*)\ge\log\sp(D_*)\ge \log\GR(f)=h_\alg(f).
$$

{We wonder whether the inequality $\sp(D_*)\ge \GR(f)$ holds
whenever $M$ is an infra-solvmanifold (of type $\R$).}
{For the relation between the growth rate $\GR(\phi)$ and asymptotic Reidemeister number, see \cite{FL1}.}
\end{Rmk}

\begin{Rmk}
From Theorem~\ref{GR-nilp} and Theorem~\ref{Sol} it follows that
the growth rate of any endomorphism on a finitely generated torsion-free nilpotent group or a lattice of the $3$-dimensional solvable Lie group $\Sol$ is an algebraic number.
The question of determining groups for which the growth rate of a group endomorphism is an algebraic number was raised by R.~Bowen in \cite[p.\!~27]{B78}.
\end{Rmk}

\end{document}